\gdef\firstpage{1}
\def\frsthdr{}
\def\firstpageone{0\thepage}
\def\firstpagetwo{00\thepage}
\def\firstpagethree{000\thepage}
\def\firstpagemark{\ifnum\firstpage <10  \firstpageone
\else\ifnum\firstpage<100 \firstpagetwo \else \ifnum\firstpage
<1000 \firstpagethree \else \firstpageone\fi\fi\fi}
\def\footline{\ifnum\thepage=\firstpage \footlineone
\gdef\firstpage{0}
\else\footlinetwo\fi}
\def\footlineone{\noindent \footnotesize \sf \hspace{\fill}  \hbox{}}
\def\footlinetwo{}
\def\titles{{Tensor completions of 2-nilpotent finitely generated torsion-free groups}}
\def\authors{{M. G. Amaglobeli, A. G. Myasnikov}}
\def\oddhedr{\ifnum\thepage=\firstpage \firsthdr \else \odhdr \fi}
\def\firsthdr{\hspace{\fill} \sl \frsthdr \hspace{\fill}\hbox{}}
\def\odhdr{\hspace{\fill}\sl\rightmark \titles \hspace{\fill}
\rm \thepage}
\def\evnhedr{\ifnum\thepage=\firstpage \firsthdr \else \evhdr \fi}
\def\evhdr{\noindent \rm \thepage\hspace*{\fill} \sl\leftmark
\authors \hspace*{\fill}\hbox{}}
\def\ps@newpstyle{\def\@oddhead{
\hspace{-0.65em} \vbox{\oddhedr\vskip 1mm \hrule width
\textwidth}
}
\def\@evenhead{
\hspace{-0.65em} \vbox{\evnhedr\vskip 1mm \hrule width
\textwidth}
}\textsc{}
\def\@oddfoot{\footline}
\def\@evenfoot{\@oddfoot}
}
\def\refer
\def\endref{\end{enumerate}\end{small} }
\def\Z{{\mathbb Z}}
\def\N{{\mathbb N}}
\def\Q{{\mathbb Q}}
\def\M{{\mathcal{M}}}
\def\N{{\mathcal{N}}}
\newtheorem{theorem}{Theorem}[section]
\newtheorem{lemma}{Lemma}[section]
\theoremstyle{definition}
\newtheorem{definition}{Definition}[section]
\newtheorem*{remark}{Remark}
\newtheorem{problem}{Problem}[section]
\begin{document}

\setcounter{page}{\firstpage}
\pagestyle{newpstyle}

\sloppy
\rm


\begin{center}
    {\Large
   \textbf{Tensor completions of 2-nilpotent finitely generated torsion-free groups} \newline

   M. G. Amaglobeli \footnote {Department of Mathematics, Iv. Javakhishvili Tbilisi State University, Georgia, e-mail: mikheil.amaglobeli@tsu.ge,}, A. G. Myasnikov \footnote {Stevens Institute of Technology, Hoboken, NJ 07030, USA, e-mail: amiasnikov@gmail.com}} \newline

\end{center}

\title {Tensor completions of 2-nilpotent finitely generated torsion-free groups
}

\author {Mikheil Amaglobeli \footnote {Department of Mathematics, Iv. Javakhishvili, Georgia, email: mikheil.amaglobeli@tsu.ge,}, Alexey Myasnikov \footnote {Department of Mathematical Sciences, Stevens Institute of Technology, Castle Point-on-Hudson, Hoboken, NJ 07030, USA, email: amiasnikov@gmail.com},}


\begin{abstract}
In this paper, we study tensor completions $G \otimes_{\N_{2,R}} R$ of finitely generated torsion-free nilpotent groups $G$ of class $2$ in the quasivariety $\N_{2,R}$ of $R$-exponential 2-nilpotent groups over a binomial integral domain $R$. We show that the classical Hall completion $G\otimes_{\mathcal{H}} R$ embeds as an abstract group (the embedding is not an $R$-homomorphism) into $G \otimes_{\N_{2,R}} R$, such that $G\otimes_{\N_{2,R}} R \simeq (G \otimes_{\mathcal{H}} R) \times D$, where $D$ is an $R$-module and the direct product is a product of abstract groups (not $R$-groups!). In particular, the canonical $R$-epimorphism $\mu: G \otimes_{\N_{2,R}} R \to G \otimes_{\mathcal{H}} R$ is a retract on $G \otimes_{\mathcal{H}} R$ with abelian kernel $D$. Moreover, in addition to the algebraic structure, we describe precisely how raising to an $R$-exponent works in the group $G \otimes_{\N_{2,R}} R$. To do this, we introduce a new type of commutators, the so-called c-commutators, which are interesting in their own right. These results answer an old question of Remeslennikov about the algebraic structure of free 2-nilpotent R-groups in the quasivariety $\N_{2,R}$. Indeed, it was shown in \cite{AMN} that if $G$~ is a free 2-nilpotent group with basis $X$ (in the variety of abstract 2-nilpotent groups), then $G \otimes_{\N_{2,R}} R$~ is a free 2-nilpotent R-group in $\N_{2,R}$ with basis $X$. Note that in this case $G \otimes_{\mathcal{H}} R$ is a free 2-nilpotent Hall $R$-group with basis $X$. As an illustration, for a free 2-nilpotent group $G$ of rank 2, we describe the group $G \otimes_{\N_{2,R}} R$, the action of $R$ on $G \otimes_{\mathcal{H}} R$, and the module $D$ in the case where $R$ is either the polynomial ring $\mathbb{Q}[t]$ or the field of rational functions $\mathbb{Q}(t)$ with coefficients in the field of rational numbers $\mathbb{Q}$.
\end{abstract}


\section{Introduction}
In this paper, we study tensor completions $G \otimes_{\N_{2,R}} R$ of finitely generated torsion-free 2-nilpotent groups $G$ in the class $\N_{2,R}$ of all 2-nilpotent $R$-groups over a binomial domain $R$. We show that the classical Hall completion $G\otimes_{\mathcal{H}} R$ embeds as an abstract group (the embedding is not an $R$-homomorphism) in $G \otimes_{\N_{2,R}} R$, such that $G\otimes_{\N_{2,R}} R \simeq (G \otimes_{\mathcal{H}} R) \times D$, where $D$ is an $R$-module and the direct product is a product of abstract groups. In particular, the canonical $R$-epimorphism $\mu: G \otimes_{\N_{2,R}} R \to G \otimes_{\mathcal{H}} R$ is a retract onto $G \otimes_{\mathcal{H}} R$ with abelian kernel $D$. Moreover, in addition to the algebraic structure, we precisely describe how raising to an $R$-exponent works in the group $G \otimes_{\N_{2,R}} R$. To this end, we introduce a new type of commutators, the so-called c-commutators, which are interesting in their own right. This answers an old question by Remeslennikov about the algebraic structure of free 2-nilpotent $R$-groups in the quasivariety $N_{2,R}$ (they are precisely the tensor $R$-completions of free 2-nilpotent groups). As an illustration, for a free 2-nilpotent group $G$ of rank 2, we describe the group $G \otimes_{\N_{2,R}} R$, the action of $R$ on $G \otimes_{\mathcal{H}} R$, and the module $D$ in the case where $R$ is either the polynomial ring $\mathbb{Q}[t]$ or the field of rational functions $\mathbb{Q}(t)$ with coefficients in the field of rational numbers $\mathbb{Q}$. This result was previously obtained by different methods by Amaglobeli and Remeslennikov in \cite{3}.

Recall that a group $G$ is called a \emph{divisible group with unique roots} if, for any $g \in G$ and $0 \neq n \in \mathbb{N}$, the equation $x^n = g$ has a unique solution in $G$. In this case, there exists a $\Q$-exponentiation on $G$, that is, a mapping $G\times \Q \to G$ defined by $(g,\frac{m}{n}) \to (g^{\frac{1}{n}})^m$, which satisfies the following properties for any $x,y \in G$ and $\alpha,\beta \in \Q$:
\begin{enumerate}
\item [1)] $x^1=x$, $x^0=e$, $e^\alpha=e$;
\item [2)] $x^\alpha x^\beta=x^{\alpha+\beta}$, $(x^\alpha)^\beta=x^{\alpha \beta}$;
\item [3)] $(y^{-1}xy)^\alpha=y^{-1}x^\alpha y$.
\end{enumerate}
We will call such groups $\Q$-groups. Mal'tsev proved that divisible torsion-free nilpotent groups are $\Q$-groups (see \cite{Malcev1,Malcev2}). A thorough study of arbitrary $\Q$-groups was carried out by Baumslag in \cite{Baumslag}. Around the same time, while studying equations in free groups, Lyndon \cite{1} introduced the concept of an $R$-group (and coined this name) for an arbitrary associative ring $R$ with unity. By definition, these are groups equipped with a map $G \times R \to G$, denoted $(g,\alpha) \to g^\alpha $, that exactly satisfy Axioms 1) --- 3) above.

Unfortunately, there are abelian Lyndon $R$-groups that are not $R$-modules. Later, Myasnikov and Remeslennikov in their paper~\cite{2} added an additional scheme of axioms (quasi-identities) to the Lyndon axioms for any $x,y \in G$ and any $\alpha \in R$ (here $[g,h] = g^{-1}h^{-1}gh$):

\begin{enumerate} \label{eq:4}
\item [4)] \text{(\emph{MR}-axiom)} \ \ \ $[g,h]=e\longrightarrow (gh)^{\alpha}=g^{\alpha}h^{\alpha}$.
\end{enumerate}

Note that all $\mathbb{Q}$-groups satisfying Axioms 1)-3) also satisfy Axiom 4), as do many other natural $R$-groups according to Lyndon. Most likely, it was precisely because of this circumstance that Axiom 4) was not added to the original definition of an $R$-group. We denote by $\mathcal{L}_{R}$ the class of all Lyndon $R$-groups, i.e., those satisfying Axioms 1)-3), and by $\mathcal{M}_{R}$ the subclass of those groups from $\mathcal{L}_{R}$ that satisfy \emph{MR}-axiom 4). In what follows, unless otherwise stated, by $R$-groups we mean groups from the class $\mathcal{M}_{R}$.

The tensor extension of a scalar ring plays an important role in module theory.
In \cite{2}, Myasnikov and Remeslennikov defined a precise analog of this construction, the so-called tensor completion, for an arbitrary group, and in \cite{6} they described a concrete method for constructing tensor completions for CSA groups, in particular, for free and torsion-free hyperbolic groups (see also \cite{BMR1}). However, the concept of tensor completion in an arbitrary variety of groups requires a special approach, since the general construction of tensor completions is not necessarily compatible with a given variety. Amaglobeli, Myasnikov, and Nadiradze introduced the general concept of a variety of $R$-groups and tensor completions in such varieties in \cite{AMN, 10}. It turned out that different, albeit equivalent, definitions of nilpotency in the class of discrete groups can lead to formally different types of nilpotent $R$-groups. The question of whether these classes are truly distinct remains open. However, it is known \cite{AMN} that in the class of 2-nilpotent groups, different definitions of nilpotency lead to the same previously mentioned quasivariety of 2-nilpotent $R$-groups $\N_{2,R}$. It is with respect to this class that we describe the tensor completions $G \otimes_{\N_{2,R}} R$ of finitely generated 2-nilpotent torsion-free groups $G$.

We will now briefly describe the structure of the paper. In Section 2, we discuss some important concepts and recap some relevant facts.

In Section 3, we introduce the basic technical notion of c-commutators $c(g,h)_\alpha$ in 2-nilpotent $R$-groups, where $g$ and $h$ are group elements and $\alpha \in R$, which measure the deviation of raising $gh$ to the $\alpha$-exponent in groups from the class $\N_{2,R}$ from the Hall $\alpha$-exponentiation. We show that many properties of $R$-exponentiation in groups can be expressed in the language of c-commutators. This results in a calculus of c-commutators similar to the classical commutator calculus for Lie algebras, but on group exponents. Such commutators and their interesting connections with ordinary commutators provide a powerful tool for studying exponentiation in groups. We have studied c-commutators only in the case of 2-nilpotent groups, but they can be defined in any nilpotent group, as well as in arbitrary groups, by considering factors by the lower central series. These are new concepts, and their successful application requires solving several fundamental questions: developing the calculus of c-commutators for arbitrary nilpotency class, in particular, obtaining analogues of Petresco words and developing a collecting process similar to the classical collecting process for ordinary commutators, describing tensor completions in the classes $\N_{c,R}$ for $c \geq 3$, etc. It is known from the classical Mal'tsev correspondence that nilpotent Lie $R$-algebras correspond to Hall nilpotent $R$-groups over a field $R$ of characteristic zero. It would be interesting to understand which "generalized Lie algebras" correspond to groups from the class $\N_{c,R}$. We discuss some of these issues in the final section 7.

In Section 4, we prove the main theorem on the group structure of $G \otimes_{\N_{2,R}} R$, where $G$ is a finitely generated 2-nilpotent torsion-free group and $R$ is an arbitrary binomial integral domain. Namely, in the notation above, we show that $G\otimes_{\N_{2,R}} R \simeq (G \otimes_{\mathcal{H}} R) \times D$, where $D$ is the $R$-module generated by all c-commutators of the form $c(g,h)_\lambda$ in $G \otimes_{\N_{2,R}} R$.

In Section 5, we describe the $R$-structure of the group $G \otimes_{\N_{2,R}} R$; that is, we precisely describe the function $(g,\lambda) \to g^\lambda$ of $R$-exponentiation in the group $G \otimes_{\N_{2,R}} R$, give an exact representation of the $R$-module $D$ in terms of generators and relations, and prove that $D$ is a free $R$-module. However, finding a basis for this module is generally a rather difficult problem that is beyond the scope of this article. Indeed, the structure of such a basis depends essentially on both $G$ and $R$.
In Section 6, we give two examples of the structure of the module $D$ when the group $G$ is the group $UT_3(\mathbb Z)$; that is, the free 2-nilpotent group of rank 2, and the ring $R$ is either the polynomial ring $\mathbb{Q}[t]$ or the field of rational functions $\mathbb{Q}(t)$.

\section{Preliminaries}

\subsection{Hall \texorpdfstring{$R$}{}-Groups}
\label{se:2.1}

In \cite{12}, Hall studied nilpotent $R$-groups satisfying the following additional axiom for all $x_1, \ldots,x_n \in G$ and $\alpha \in R$:
\begin{enumerate}
\item [5)] $x_1^\alpha x_2^\alpha\cdots x_n^\alpha=(x_1x_2\cdots x_n)^\alpha\tau_2(\bar{x})^{\binom{\alpha}{2}}\cdots \tau_c(\bar{x})^{\binom{\alpha}{c}}$,

where $\tau_i$~ are Petresco group words over the alphabet $x_1, \ldots,x_n$, $c$~--- the nilpotency class of $G$, and ${\binom{\alpha}{i}}$~--- the binomial coefficients.
\end{enumerate}
In this case, the ring $R$ must be a binomial domain, i.e., an integral domain, where for each $\alpha \in R$, the binomial coefficient
${\binom{\alpha}{i}} = \frac{\alpha(\alpha-1)\ldots (\alpha-i+1)}{i!}$, considered as an element of the field of fractions of $R$, belongs to $R$.

It is known that the word $\tau_i(\bar x)$, considered an element of the free group $F$ with basis $x_1, \ldots,x_n$, belongs to the $i$-th term of the lower central series of $F$. So, if $[x,y] =e$ in $G$, then $\tau_i(x,y) = e$ for all $i > 1$. This implies that   Axiom 5) is a much stronger and more precise version of Axiom 4) in the case of nilpotent groups. These groups, now known as Hall $R$-groups, have become the main object of study in the theory of nilpotent $R$-groups. In a sense, they are true nilpotent modules over a ring $R$. The class of all Hall $R$-groups that are nilpotent of step $\leq c$ is denoted by $\mathcal H_{c,R}$. For the theory of Hall nilpotent groups, we refer to the books by Hall \cite{12} and Warfield \cite{W}. Here we would like to note one important point. All nilpotent $\Q$-groups satisfying Axioms 1)-3) also satisfy Axioms 4) and 5) (see section \ref{se:k-groups}). Therefore, nilpotent Mal'tsev $\Q$-groups are exactly the same as nilpotent Hall $\Q$-groups. However, as the main theorem below shows, this is not true for an arbitrary field $k$ of characteristic zero (see also \cite{3, 9}).

\subsection{\texorpdfstring{$R$}{}-Groups}

In this section, following the works of \cite{1,2,AMN}, we introduce the necessary terminology and discuss some properties of $R$-groups that are useful for further understanding.

Let $R$ be an associative commutative ring with identity and $G \in \M_R$. The standard group language $L_{gr}$ can be extended by symbols for the unary operations $f_\alpha$ for each $\alpha \in R$, yielding the language of $R$-groups: $L_{gr}^R = L_{gr} \cup \{f_\alpha \mid \alpha \in R\}$. The operation $f_\alpha$ is interpreted in an $R$-group $G$ by raising to the $\alpha$-exponent: $f_\alpha(g) = g^\alpha$ for $g \in G$. As usual in universal algebra, the notions of $R$-homomorphisms, $R$-subgroups, and $R$-generations can be defined in the category $\M_R$ by considering them as groups in the language $L_{gr}^R$. Thus, a homomorphism $\phi:G\rightarrow H$ of two $R$-groups $G$ and $H$ is an $R$-homomorphism if $\phi(g^\alpha)=(\phi(g))^\alpha$ for all $g\in G$ and $\alpha\in R$; A subgroup $N \leq G$ is called an \emph{$R$-subgroup} of $G$ if it is invariant under raising to the $R$-exponent, and a subset $X \subseteq G$ \emph{$R$-generates} an $R$-subgroup $\langle X\rangle_R$, which is the smallest $R$-subgroup of $G$ containing $X$.
Note that $G$ is generated (as an $R$-group) by a set $A \subseteq G$ if every element of $G$ can be represented as $w(a_1, \ldots,a_n)$ for some group $R$-word $w(z_1, \ldots,z_n)$ and $a_1, \ldots, a_n \in A$. Here, if $z_1, \ldots, z_n$ are variables, then any expression $w(z_1, \ldots,z_n)$ obtained from these variables by multiplication, inversion, and exponentiation by elements of $R$ is called a \emph{group $R$-word in the variables $z_1, \ldots,z_n$} (group $R$-words are precisely terms in the language $L_{gr}^R$).

Axioms 1)-3) and 5) are identities in the language $L_{gr}^R$, while Axiom 4) is a quasi-identity. Therefore, the class of all Lyndon $R$-groups, $\mathcal{L}_R$, like the class $\mathcal{H}_{c,R}$ of all Hall groups of nilpotency class $\leq c$, is a variety in the language of $L_{gr}^R$, while the classes $\M_R$ and $\N_{2,R}$ are quasivarieties. It follows from general theorems of universal algebra that these classes contain free $R$-groups and tensor $R$-completions with respect to these classes; the standard theory of $R$-ideals, i.e., normal subgroups whose quotient groups again belong to the same class; we can speak of groups defined by generators and relations in these classes, etc.

\subsection{Ideals and \texorpdfstring{$R$}{}-commutators}
\label{se:2.2}

By definition, $R$-ideals in a given quasivariety of $R$-groups are normal $R$-subgroups whose quotient groups again lie in the same quasivariety. Since $\mathcal{L}_{R}$ and $\mathcal{H}_{c,R}$ are varieties of $R$-groups, $R$-ideals in them are simply normal $R$-subgroups; while $\M_R$ and $\N_{c,R}$ are quasivarieties, so to define $R$-ideals here, additional conditions are required that ensure the fulfillment of Axiom 4) in the quotient group. For this purpose, the notion of $\alpha$-commutator was introduced in \cite{2}.
Namely,
let $G\in\mathcal{L}_R$, $g,h\in G$, $\alpha\in R$. Then the element
\begin{equation} \label{def:1.4}
      (g,h)_{\alpha}=h^{-\alpha}g^{-\alpha}(gh)^{\alpha}   
\end{equation}
is called the \emph{$\alpha$-commutator} of $g$ and $h$. We call such commutators \emph{$R$-commutators} if we do not want to specify the element $\alpha \in R$. Clearly, $(gh)^{\alpha} =g^{\alpha}h^{\alpha}(g,h)_{\alpha}$, and hence the $\alpha$-commutators $(g,h)_\alpha$ measure the "deviation
of the exponentiation in $G$ from the commutative one". Therefore, in abelian $R$-groups $G \ $ $(g,h)_\alpha = e$ for all $g,h \in G$ and $\alpha \in R$, and in Hall groups $G \in \mathcal{H}_{c,R}$, by Axiom 5),
$$
(g,h)_\alpha = \tau_c(g,h)^{-\binom{\alpha}{c}} \ldots \tau_2(g,h)^{-\binom{\alpha}{2}}.
$$
In particular, for $G \in \mathcal{H}_{2,R}$ we obtain
\begin{equation} \label{eq:Hall-commutator}
    (g,h)_\alpha = \tau_2(g,h)^{-\binom{\alpha}{2}}.
\end{equation}

Note that $(g,h)_{-1} =[h^{-1},g^{-1}]$, so every commutator is also an $\alpha$-commutator. By definition, if $G\in\mathcal{L}_R$, then
$$
 G\in\mathcal{M}_R \Longleftrightarrow \big([g,h]=e\Longrightarrow (g,h)_{\alpha}=e\big).
$$

The last equivalence leads to the definition of the $\mathcal{M}_R$-ideal.

\begin{definition}(\cite{2})\label{def:1.5}
Let $G\in\mathcal{L}_R$. A normal $R$-subgroup $H\trianglelefteq G$ is called an \emph{$R$-ideal} if for any $g,h\in G$, if  $[g,h]\in H$, then $(g,h)_{\alpha}\in H$ for any $\alpha\in R$.
\end{definition}

\subsection{Nilpotent \texorpdfstring{$R$}{}-Groups}\label{se:k-groups}

For a positive integer $c>1$, we denote by $\mathcal{N}_{c,R}$ the quasivariety of all nilpotent $R$-groups in which the $c$-nilpotency identity
$$  \forall\,x_1,\dots,x_{c+1}\quad [x_1,\dots,x_{c+1}]=e        $$
holds, where $[x_1,\dots,x_{c+1}]$ is the left-normed commutator of weight $c+1$.
It is clear that $\mathcal{H}_{c,R} \subseteq \N_{c,R}$.

The fundamental difference between nilpotent $R$-groups in $N_{c,R}$ and Hall $R$-groups is as follows.
Axiom 5 (the Hall-Petresco formula) allows one to rewrite any group $R$-word $w(z_1, \ldots,z_n)$ into an equivalent word $u(z_1, \ldots,z_n)$ of the form $y_1^{\alpha_1} \ldots y_m^{\alpha_m}$, where each $y_i$ is one of the variables $z_1, \ldots, z_n$ and $\alpha_i \in R$. Whereas in arbitrary nilpotent $R$-groups, this is not the case. Here, two group $R$-words $w(z_1, \ldots,z_n)$ and $u(z_1, \ldots,z_n)$ are equivalent over $G$ if, for any $g_1, \ldots,g_n \in G$, we have $w(g_1, \ldots,g_n) = u(g_1, \ldots,g_n)$.
More precisely, the following statement, proved in the paper \cite{ABM}, holds.

\begin{lemma} \label{le:rat-exp}
	 Let $c\geq 1$ be a fixed positive integer. Then every group $R$-word $w(z_1, \ldots,z_n)$ is equivalent to an $R$-word of the form $y_1^{\alpha_1} \ldots y_m^{\alpha_m}$, where $y_i$ is one of the variables $z_1, \ldots, z_n$ and $\alpha_i \in R$, over any Hall nilpotent $R$-group $G$ of nilpotency class $\leqslant c$.
		
\end{lemma}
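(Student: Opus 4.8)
The plan is to reduce to the free object and induct on the nilpotency class. If a group $R$-word $w$ involves the variables $z_1,\dots,z_n$, it suffices to produce a single word $u=y_1^{\alpha_1}\cdots y_m^{\alpha_m}$ (call such words \emph{reduced}) with $w=u$ in the free Hall $R$-group $F=F_{c,R}(z_1,\dots,z_n)$ of class $\le c$: by the universal property of $F$, any $n$-tuple in any $G\in\mathcal{H}_{c,R}$ is the image of the $z_i$ under an $R$-homomorphism $F\to G$, so the equality $w=u$ then transfers to all of $\mathcal{H}_{c,R}$. For $c=1$ the group $F$ is an abelian Hall $R$-group, and a routine structural induction on $w$ --- using $(x^{\alpha})^{\beta}=x^{\alpha\beta}$, $x^{\alpha}x^{\beta}=x^{\alpha+\beta}$, and $(xy)^{\alpha}=x^{\alpha}y^{\alpha}$ (the $MR$-axiom, valid here since $F$ is abelian) --- shows $w$ equals $z_1^{\gamma_1}\cdots z_n^{\gamma_n}$ for suitable $\gamma_i\in R$.

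For the inductive step, assume the claim for class $\le c-1$, let $c\ge 2$, and write $\gamma_c(F)$ for the $c$-th term of the lower central series. It is central in $F$, and it is an $R$-subgroup: for an element $a$ of commutator weight $\ge c-1$, any $g\in F$ and any $\alpha\in R$, axiom 3) gives $a^{-1}g^{-\alpha}a=(a^{-1}g^{-1}a)^{\alpha}$, so
\begin{equation*}
[a,g^{\alpha}]=(a^{-1}g^{-1}a)^{\alpha}g^{\alpha}=(g^{-1}[g^{-1},a])^{\alpha}g^{\alpha}=g^{-\alpha}[g^{-1},a]^{\alpha}g^{\alpha}=[g^{-1},a]^{\alpha}=[a,g]^{\alpha},
\end{equation*}
where the $MR$-axiom is used in the third step (it applies since $[g^{-1},a]\in\gamma_c(F)$ is central), and the last equalities use centrality of $\gamma_c(F)$ and the identity $[g^{-1},a]=[a,g]$, which holds modulo $\gamma_{c+1}(F)=1$. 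In particular $[g_1,\dots,g_c]^{\alpha}=[g_1,\dots,g_{c-1},g_c^{\alpha}]\in\gamma_c(F)$, and since $\gamma_c(F)$ is abelian this makes it an $R$-subgroup. Hence $F/\gamma_c(F)$ is again a Hall $R$-group; it has class $\le c-1$, and $\gamma_c(F)$ is precisely the verbal subgroup of $F$ corresponding to the subvariety $\mathcal{H}_{c-1,R}\subseteq\mathcal{H}_{c,R}$, so $F/\gamma_c(F)$ is the free object of $\mathcal{H}_{c-1,R}$ on the images of the $z_i$, i.e.\ $F/\gamma_c(F)\cong F_{c-1,R}(z_1,\dots,z_n)$. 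Applying the induction hypothesis there yields a reduced word $v$ with $w\equiv v\pmod{\gamma_c(F)}$ over $F$; equivalently, $\rho:=v^{-1}w$ takes all its values in $\gamma_c(F)$.

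It remains to show that every element of $\gamma_c(F)$ is equivalent to a reduced word. Since $\gamma_c(F)$ is an abelian Hall $R$-group it is an $R$-module, and the $R$-exponent version of the classical collecting process --- which rests on Axiom 5 and on the commutator identities of Hall $R$-group calculus --- expresses it as the $R$-span of the left-normed weight-$c$ commutators $w_\ell=[z_{i_1},\dots,z_{i_c}]$ in the generators. Writing $w_\ell=[a_\ell,z_{j_\ell}]$ with $a_\ell=[z_{i_1},\dots,z_{i_{c-1}}]$, the displayed identity gives, for each $\delta\in R$, $w_\ell^{\delta}=[a_\ell,z_{j_\ell}^{\delta}]=a_\ell^{-1}z_{j_\ell}^{-\delta}a_\ell z_{j_\ell}^{\delta}$, which is a product of powers of variables once the commutator word $a_\ell$ is written out; thus $w_\ell^{\delta}$ is reduced. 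Consequently $\rho$ is equivalent to a product of reduced words, and therefore $w=v\rho$ is equivalent to a reduced word, closing the induction.

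The main obstacle is the commutator bookkeeping in the inductive step, concentrated in two places: establishing the $R$-linear collecting process, i.e.\ that $\gamma_c(F)$ is $R$-spanned by the weight-$c$ left-normed commutators in the generators; and, throughout the manipulations, controlling the ``correction'' commutators produced by Axiom 5 and by the commutator identities, all of which must be checked to have weight $>c$ and hence to vanish in $F$. An alternative, more computational route would push $w$ through Axiom 5 directly so as to distribute exponents over products, paying the price of the Hall--Petresco correction factors $\tau_j(\cdot)^{\binom{\alpha}{j}}$; this makes the use of Axiom 5 explicit, but then demands a well-founded termination argument for the induced rewriting, which is essentially the same bookkeeping organized differently.
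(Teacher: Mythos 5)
The paper itself contains no proof of this lemma---it is quoted from \cite{ABM}---so there is no in-paper argument to compare with, and your proof has to stand on its own. Its architecture (transfer to the free Hall $R$-group $F$ of class $\le c$ via the universal property, quotient by the central term $\gamma_c(F)$, apply the induction hypothesis there, then absorb the discrepancy lying in $\gamma_c(F)$) is sound, and the steps you actually carry out check out: the base case, the identity $[a,g^{\alpha}]=[a,g]^{\alpha}$ for $a$ of commutator weight $\ge c-1$ (the implicit use that the centre is an $R$-subgroup is covered by Axiom 3), the fact that $\gamma_c(F)$ is a central $R$-subgroup, and that $F/\gamma_c(F)$ is a Hall $R$-group of class $\le c-1$ to which the induction hypothesis applies.

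The genuine gap is the step you yourself flag as ``the main obstacle'': you assert, without proof, that an ``$R$-exponent version of the classical collecting process'' shows that $\gamma_c(F)$ is the $R$-span of the left-normed weight-$c$ commutators in the generators $z_i$. That claim is precisely the Hall--Petresco bookkeeping the lemma is meant to encapsulate: to run it one needs, for instance, congruences such as $[x^{\alpha},y]\equiv[x,y]^{\alpha}\pmod{\gamma_{3}(F)}$ for the inner slots (your displayed identity only handles the last slot at top weight, where the error vanishes), and a normal form for arbitrary slot entries modulo $\gamma_2(F)$ as products of $R$-powers of generators, which in turn involves knowing that $\gamma_2(F)$ is an $R$-ideal---all further pieces of the same calculus, none of which are established. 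As written, the pivotal step is deferred rather than proved. Note, though, that your own framework admits a cheaper repair: since $\gamma_{c+1}(F)=1$, the subgroup $\gamma_c(F)$ is generated as an abstract group (no $R$-exponents needed) by left-normed commutators $[g_1,\dots,g_c]$, and such a top-weight commutator depends on its entries only modulo $\gamma_2(F)$; applying your induction hypothesis in $F/\gamma_c(F)$ replaces each $g_i$ by a value $p_i$ of a reduced word with $g_i\equiv p_i\pmod{\gamma_c(F)\subseteq\gamma_2(F)}$, and $[p_1,\dots,p_c]$, written out as a plain group word, is already reduced, as are products of such elements. With that substitute for the collecting claim your induction closes; without it (or an actual proof of the $R$-collection process) the argument is incomplete.
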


\subsection{Tensor Completions of Nilpotent \texorpdfstring{$R$}{}-Groups}
\label{se:tensor-completions}

The operation of tensor completion plays a crucial role in the study of $R$-groups. In this section, we discuss tensor completions in the classes $\N_{c,R}$ and $\mathcal{H}_{c,R}$.

Let $G$ be a torsion-free nilpotent group of nilpotency class $c\geq 1$, and let $R$ be an associative commutative ring with 1 of characteristic 0 (in the case of Hall groups, we also assume that $R$ is a binomial domain). In particular, the embedding $\Z \leq R$ is fixed.

Recall that an $R$-group $G \otimes_{\N_{c,R} }R$ is called a \emph{tensor $R$-completion of $G$ in the class $\N_{c,R}$} if there exists a homomorphism $\lambda:G\to G \otimes_{\N_{c,R} }R$ such that the following conditions are satisfied:

\begin{enumerate}
\item[{\rm (1)}] $\lambda(G)$ \ $R$-generates $G \otimes_{\N_{c,R}}R$;

\item[{\rm (2)}] for any $R$-group $H \in \N_{c,R}$ and an arbitrary homomorphism $\varphi:G\to H$, there exists an $R$-homomorphism $\psi: G \otimes_{\N_{c,R}} R\to H$ that makes the following diagram commutative:
$$ \begin{matrix}
\xymatrix{
G\ar[r]^{\lambda\quad} \ar[d]_{\varphi} & G \otimes_{\N_{c,R}} R \ar@{-->}[dl]^{\psi} \\
H & }\end{matrix} \;\;\; (\varphi=\psi\circ \lambda). $$
\end{enumerate}

It was proved in \cite{AMN} that for any such group $G$ and any such ring $R$, the tensor completion $G \otimes_{\N_{c,R}}R$ exists and is unique up to $R$-isomorphism. Moreover, the homomorphism $\lambda$ gives a natural embedding of $G$ into $G \otimes_{\N_{c,R}}R$. If $G$ is an abelian $R$-group (i.e., $c= 1$), then $G \otimes_{\N_{c,R}}R\cong G\otimes {R}$ is the tensor product of the $\Z$-module $G$ and the ring $R$.

Tensor $R$-completions $G \otimes_{\mathcal{H}_{c,R}} R$ of a group $G$ in the class $\mathcal{H}_{c,R}$ are also well known and are described by Hall in \cite{12}. Namely, let $G$ be finitely generated (the general case is treated similarly; one can also use direct limits of finitely generated groups \cite{2}). Then $G$ has a tuple of elements $\bar w = (w_1, \ldots, w_n)$ (the Mal'tsev basis of $G$) such that every element $g$ of $G$ is uniquely representable as
\begin{equation} \label{eq:Malcev-basis}
g= w_1^{t_1} \ldots w_n^{t_n}
\end{equation}
for some $t_i \in \Z$. The tuple $t(g) = (t_1, \ldots,t_n)$ is called the tuple of Mal'tsev coordinates $g$. In this case, the multiplication of elements of the form (\ref{eq:Malcev-basis}) is performed using some polynomials $f_1(\bar x, \bar y), \ldots, f_n(\bar x,\bar y)$, where $\bar x = (x_1, \ldots,x_n), \bar y = (y_1, \ldots,y_n)$ such that for any $g, h \in G$
$$
t(gh) = (f_1(t(g),t(h)), \ldots, f_n(t(g),t(h))).
$$
Similarly, through polynomials, raising to the exponent $\lambda \in \Z$ is introduced (see \cite{12}).
To construct $G \otimes_{\mathcal{H}_{c}} R$, we take the same Mal'tsev basis
$\bar w = (w_1, \ldots, w_n)$ of $G$, consider all formal products
$$
w_1^{\alpha_1} \ldots w_n^{\alpha_n},
$$
and introduce multiplication and exponentiation $\alpha \in R$ on them using the same polynomials as for $G$. Since $R$ is binomial, all values of these polynomials are defined in $R$ (see details in \cite{W}).


\section{Commutator Identities in 2-Nilpotent \texorpdfstring{$R$}{R}-Groups}
\label{sec:commutators}

In this section, we introduce the concept of a \emph{$c$-commutator} and establish some useful relations in 2-nilpotent $R$-groups in the language of $\alpha$-commutators and c-commutators. In particular, we rewrite Axioms 1)-4) of an exponential $R$-group in the language of such commutators. To precisely track which axioms follow from which relations, we fix a 2-nilpotent group $G$, in which we define a function $G\times R \to G$, denoted $(g,\alpha) \to g^\alpha$. We do not assume any other properties of this function in advance. The ring $R$ is still assumed to be binomial.

\begin{definition}
For $g,h \in G$ and $\alpha \in R$, we define the \emph{$c$-commutator}
\begin{equation} \label{eq:def-c-comm}
c(g,h)_\alpha = [g,h]^{\binom{\alpha}{2}}(g,h)_\alpha.
\end{equation}

\end{definition}
Then
\begin{equation} \label{eq:exp-c-comm}
(gh)^\alpha = g^\alpha h^\alpha [g,h]^{-\binom{\alpha}{2}} c(g,h)_\alpha
\end{equation}
and hence, the element $c(g,h)_\alpha$ \emph{measures how much exponentiation  $(gh)^\alpha$ in $G$ differs from the Hall exponentiation.}

Before addressing Axioms 1)---4), we note several general facts about 2-nilpotent $R$-groups.

\medskip
\noindent
{\bf F0)} [{\it Weak form of Axiom 4)}] For any 2-nilpotent $R$-group $G$, if $z \in Z(G)$, then for any $g \in G, \alpha \in R$
\begin{equation} \label{eq:F0}
(gz)^\alpha = g^\alpha z^\alpha.
\end{equation}

\medskip
Below, we identify $n \in \mathbb{Z}$ with $1\cdot n \in R$ and $r = \frac{m}{n} \in \mathbb{Q}$ with $mn^{-1}$ in $R$, provided that $n$ is invertible in $R$. The set of all such $mn^{-1} \in R$ is denoted by $\mathbb{Q} \cap R$.

\medskip
\noindent 
{\bf F1)} [{\it Rational exponents are always Hall exponents}] 
For any 2-nilpotent $R$-group $G$ and for any $g,h \in G, r \in \mathbb{Q} \cap R$
\begin{equation} \label{eq:rationalb}
    c(g,h)_{r}= e. 
\end{equation}
Indeed, as we mentioned in Section \ref{se:2.1}, exponentiation by rational numbers in any $R$-group is Hall (see \cite{ABM}).

\medskip
\noindent 
{\bf F2).} [{\it Axiom 3)}] Suppose that the function $(g,\alpha) \to g^\alpha$ satisfies F0) in the group $G$. Then for any $g, h \in G$, $\alpha, \beta \in R$
\begin{equation} \label{eq:14}
(h^{-1}gh)^\alpha = h^{-1}g^\alpha h \Longleftrightarrow [g^\alpha,h] = [g,h]^{\alpha}.
\end{equation}

Indeed, if $(h^{-1}gh)^\alpha = h^{-1}g^\alpha h$, then using $[g,h] \in Z(G)$ (from the 2-nilpotency of $G$), we obtain that, on the one hand,  

$$
(h^{-1}gh)^\alpha = (gg^{-1}h^{-1}gh)^\alpha = (g[g,h])^\alpha = g^\alpha[g,h]^\alpha.
$$
On the other hand,
$$
h^{-1}g^\alpha h = g^\alpha g^{-\alpha}h^{-1}g^\alpha h = g^\alpha[g^\alpha,h].
$$
From which, $[g,h]^\alpha = [g^\alpha,h]$. It is clear that the chain of equalities above is invertible, which means that the converse statement is also true.

\medskip
As a consequence of F2) we obtain

\medskip
\noindent 
{\bf F3)} Suppose that the function $(g,\alpha) \to g^\alpha$ satisfies F0) and Axiom 3) in the group $G$. Then for any $g,h \in G, \alpha \in R$
\begin{equation} \label{eq:rationalb2}
    [g^\alpha,h] = [g,h]^{\alpha}. 
\end{equation}
It follows that for any $g,h \in G, \alpha, \beta \in R$
$$
  [g^\alpha,h^\beta] = [g,h]^{\alpha \beta}.
$$

\medskip
\noindent 
Note that from F3) it follows that the commutator subgroup $[G,G]$ of any 2-nilpotent $R$-group is an $R$-subgroup of $G$.

\medskip
\noindent 
{\bf F4)} {\it  Suppose that the function $(g,\alpha) \to g^\alpha$ satisfies F3) in the group $G$. Then for any 2-nilpotent $R$-group $G$ and for any $g, h \in G$, $\alpha \in R$
\begin{equation} \label{eq:fact3}
   (g,h)_\alpha \in Z(G) \ \ \text{и} \ \  c(g,h)_\alpha \in Z(G).
\end{equation}
}
Indeed, according to the formula (\ref{eq:def-c-comm}) it is sufficient to verify that the $\alpha$-commutator $(g,h)_{\alpha}$ lies in the center of $G$:
\begin{multline*}
\big[(g,h)_{\alpha},z\big]=\big[h^{-\alpha}g^{-\alpha}(gh)^{\alpha},z\big]=[h^{-\alpha},z]\,[g^{-\alpha},z]\,\big[(gh)^{\alpha},z\big]= \\
    =[h,z]^{-\alpha}[g,z]^{-\alpha}[gh,z]^{\alpha}=[h,z]^{-\alpha}[g,z]^{-\alpha}[g,z]^{\alpha}[h,z]^{\alpha}=e. \qedhere
\end{multline*}

From F4) as a consequence we obtain,

\medskip
\noindent 
{\bf F5)} {\it The center $Z(G)$ is an $R$-ideal of $G$.} 

\medskip
\noindent 
{\bf F6)} {\it Suppose that the function $(g,\alpha) \to g^\alpha$ satisfies F0) and F3) in the group $G$. Then, if $z \in Z(G)$, then for any $g,f \in G$, $\alpha \in R$
\begin{equation} \label{eq:F5a}
    c(g,fz)_\alpha = c(g,f)_\alpha.
    \end{equation}
In particular,
\begin{equation} \label{eq:F5b}
    c(g,z)_\alpha = e,
    \end{equation}
    and also
  \begin{equation} \label{eq:left-comm}
    c(fh,g)_\alpha = c(hf,g)_\alpha
\end{equation}  
}

Indeed, 
$$
(g,fz)_\alpha = (fz)^{-\alpha}g^{-\alpha} (gfz)^\alpha = f^{-\alpha}z^{-\alpha}g^{-\alpha}(gf)^\alpha z^\alpha = f^{-\alpha}g^{-\alpha}(gf)^\alpha = (g,f)_\alpha.
$$
Therefore, according to formula (\ref{eq:def-c-comm}), $c(g,fz)_\alpha = c(g,f)_\alpha$. Equality (\ref{eq:left-comm}) follows from (\ref{eq:F5a}), since $fh = hf[f,h]$ and $[f,h] \in Z(G)$.

\medskip
\noindent
{\bf F7)} [{\it Commutativity of the ring $R$}] {\it Suppose that the function $(g,\alpha) \to g^\alpha$ satisfies F0), F3), and Axiom 2.2 in the group $G$. Then the commutativity of the ring $R$ implies that the following identity for $c$-commutators holds in $G$: for any $f,h \in G$ and $\alpha,\beta \in R$
\begin{equation} \label{eq:6}
c(f,h)_{\alpha}^{\beta}c(f^{\alpha},h^{\alpha})_{\beta}
=c(f,h)_{\beta}^{\alpha}c(f^{\beta},h^{\beta})_{\alpha}.
\end{equation}
}

Indeed, by the commutativity of the ring $R$ and the satisfiability of Axiom 2.2, $((fh)^\alpha)^\beta = ((fh)^\beta)^\alpha$. Therefore, using F0), F3), F4) and F6) (the last two follow from F0) and F3)), we obtain
\begin{align*}
((fh)^{\alpha})^{\beta} & =\big(f^{\alpha}h^{\alpha}(f,h)_{\alpha}\big)^{\beta}=
f^{\alpha\beta}(h^{\alpha}(f,h)_{\alpha})^{\beta}(f^{\alpha},h^{\alpha}(f,h)_{\alpha})_{\beta} \\
& =f^{\alpha\beta}h^{\alpha\beta}(f,h)_{\alpha}^{\beta}(f^{\alpha},h^{\alpha})_{\beta}. \\ 
((fh)^{\beta})^{\alpha} & =\big(f^{\beta}h^{\beta}(f,h)_{\beta}\big)^{\alpha}=f^{\beta\alpha}(h^{\beta}(f,h)_{\beta})^{\alpha}(f^{\beta},h^{\beta}(f,h)_{\beta}){\alpha} \\ 
& =f^{\beta\alpha}h^{\beta\alpha}(f,h)_{\beta}^{\alpha}(f^{\beta},h^{\beta})_{\alpha}.
\end{align*}
Canceling both sides by $f^{\alpha\beta}h^{\alpha\beta}$, we obtain
$$
(f,h)_{\alpha}^{\beta}(f^{\alpha},h^{\alpha})_{\beta}
=(f,h)_{\beta}^{\alpha}(f^{\beta},h^{\beta})_{\alpha}.
$$
Now rewriting this equality in terms of $c$-commutators using formula (\ref{eq:def-c-comm}), we obtain (\ref{eq:6}).

To prove the converse, we additionally need Axiom 2.1 and the property that the group $G$ has no $R$-torsion. Then, given F2) and F3), the equalities above are invertible, which means $((fh)^\alpha)^\beta = ((fh)^\beta)^\alpha$. By Axiom 2.2, we obtain $f^{\alpha \beta} = f^{\beta \alpha }$ for any $f \in G, \alpha, \beta \in R$. By Axiom 2.1, $f^{\alpha\beta -\beta\alpha} = e$, which implies $\alpha\beta -\beta\alpha = 0$, since there is no $R$-torsion in $G$.

Note that (\ref{eq:6}) can also be deduced from F9) below.

We will now rewrite the axioms of exponential $R$-groups in 2-nilpotent $R$-groups in the language of $R$-commutators and $c$-commutators.

\medskip
\noindent
{\bf F8)} [{\it Axiom 2.1: $g^{\alpha+\beta} = g^{\alpha}g^{\beta}$}]
{\it Suppose that the function $(g,\alpha) \to g^\alpha$ satisfies F3) in the group $G$. Then for any $g, h \in G$, $\alpha \in R$ the following equivalence holds:
$$
g^{\alpha+\beta} = g^{\alpha}g^{\beta} \Longleftrightarrow c(g,h)_{\alpha +\beta} = c(g,h)_{\alpha}c(g,h)_{\beta}.
$$
}

Indeed, on the one hand
$$
(gh)^{\alpha+\beta}=g^{\alpha+\beta}h^{\alpha+\beta} (g,h)_{\alpha+\beta},
$$
and on the other hand
$$
(gh)^{\alpha+\beta}=(gh)^\alpha (gh)^{\beta}=g^\alpha h^\alpha (g,h)_\alpha g^\beta h^\beta (g,h)_\beta=g^{\alpha+\beta} h^{\alpha+\beta} [h^\alpha,g^\beta] (g,h)_\alpha (g,h)_\beta.
$$
Therefore, using F3), we obtain for any $g,h \in G$, $\alpha, \beta \in R$
\begin{equation}\label{eq:(1)}
(g,h)_{\alpha + \beta}=(g,h)_\alpha (g,h)_\beta [h,g]^{\alpha\beta}. \end{equation}
Rewriting this equality in terms of $c$-commutators, we obtain
\begin{equation} \label{eq:axiom2}
c(g,h)_{\alpha +\beta} = c(g,h)_{\alpha}c(g,h)_{\beta}.
\end{equation}
The converse is also true. If we reverse the steps of the previous proof, we see that Axiom 2.1 follows from equality (\ref{eq:axiom2}) and F3).

\medskip
\noindent
{\bf F9)} [{\it Axiom 2.2: $g^{\alpha \beta} = (g^{\alpha})^{\beta}$}]
{\it Suppose that the function $(g,\alpha) \to g^\alpha$ satisfies F0) and F3) in the group $G$. Then for any $g, h \in G$, $\alpha \in R$ the following equivalence holds:
$$
g^{\alpha \beta} = (g^{\alpha})^{\beta} \Longleftrightarrow c(g,h)_{\alpha\beta} = c(g^{\alpha},h^{\alpha})_\beta c(g,h)_{\alpha}^{\beta}
$$
}

On the one hand,
$$
(gh)^{\alpha\beta}=g^{\alpha\beta} h^{\alpha\beta}(g,h)_{\alpha\beta}.
$$
On the other hand, from F3) it follows that $(g,h)_\alpha \in Z(G)$, then using F0) we obtain for any $g \in G$ and $z \in Z(G)$

$$
(gh)^{\alpha\beta}=((gh)^\alpha)^\beta=(g^\alpha h^\alpha (g,h)_\alpha)^\beta=g^{\alpha\beta} h^{\alpha \beta} (g^\alpha,h^\alpha)_\beta (g,h)_\alpha^\beta.
$$
Therefore
\begin{equation}\label{eq:(2)}
(g,h)_{\alpha \beta}=(g^\alpha,h^\alpha)_\beta (g,h)_\alpha^\beta.
\end{equation}
It follows that
\begin{equation} \label{eq:axiom2b} 
c(g,h)_{\alpha\beta} = c(g^{\alpha},h^{\alpha})_\beta c(g,h)_{\alpha}^{\beta}
\end{equation}
The converse is also true: equality (\ref{eq:axiom2b}), given F0) and F3), implies Axiom 2.2).

\medskip
\noindent
{\bf F10)} [{\it Axiom 4) $[g,h] = e \Rightarrow (gh)^\alpha = g^\alpha h^\alpha$}] In a group $G$, Axiom 4) is equivalent to the following quasi-identity. For any $g,h \in G, \alpha \in R$
\begin{equation} \label{eq:axiom4b}
[g,h] = e \Rightarrow c(g,h)_\alpha = e.
\end{equation}

Indeed, in the language of $\alpha$-commutators, Axiom 4) is equivalent to the quasi-identity
$$
[g,h] = e \Rightarrow (g,h)_\alpha = e,
$$
which, in turn, is equivalent to (\ref{eq:axiom4b}).

\medskip
\noindent
{\bf F11)} [{\it Axiom 3) $(h^{-1}gh)^\alpha = h^{-1}g^\alpha h$ in terms of ordinary commutators}] {\it Suppose that the function $(g,\alpha) \to g^\alpha$ satisfies Axiom 4). Then in the group $G$ Axiom 3) is equivalent to F3).}

Indeed, using the 2-nilpotency of $G$ and Axiom 4) we obtain
$$
(h^{-1}gh)^\alpha = (gg^{-1}h^{-1}gh)^\alpha = (g[g,h])^\alpha = g^\alpha[g,h]^\alpha.
$$
$$
h^{-1}g^\alpha h = g^\alpha g^{-\alpha}h^{-1}g^\alpha h = g^\alpha[g^\alpha,h].
$$
It follows that
\begin{equation} \label{eq:14b}
[g^\alpha,h] = [g,h]^\alpha.
\end{equation}
The converse is also true: Axiom 4) and equality (\ref{eq:14b}) imply Axiom 3) in a 2-nilpotent group.

\medskip
\noindent
{\bf F12)} [{\it  Commutators with inverse scalar}]  Given F1) and F9), for any $g, h \in G$ and invertible $\alpha \in R$, we obtain
\begin{equation}\label{eq:(4b)}
    c(g,h)_{\alpha^{-1}}=c(g^{\alpha^{-1}},h^{\alpha^{-1}})_\alpha^{-\alpha^{-1}}.
\end{equation}

Indeed, as a consequence of F1) and F9) we have
$$e=(g,h)_1=(g,h)_{\alpha^{-1}\alpha} = (g^{\alpha^{-1}},h^{\alpha^{-1}})_\alpha(g,h)^\alpha_{\alpha^{-1}}.
$$
It follows that
$$
(g,h)^{-\alpha}_{\alpha^{-1}}=(g^{\alpha^{-1}},h^{\alpha^{-1}})_\alpha,
$$
and, therefore,

\begin{equation}\label{eq:(4)}
    (g,h)_{\alpha^{-1}}=(g^{\alpha^{-1}},h^{\alpha^{-1}})_\alpha^{-\alpha^{-1}}.
\end{equation}
Replacing $\alpha$-commutators with c-commutators, we obtain (\ref{eq:(4b)}).

\medskip
\noindent
{\bf F13)} [{\it  Symmetry of c-commutators}]  {\it Suppose that the function $(g,\alpha) \to g^\alpha$ satisfies F3). Then in the group $G$ for any $g,h \in G$, $\alpha \in R$ the equality holds}
\begin{equation} \label{eq:sym-c-comm}
c(g,h)_\alpha = c(h,g)_\alpha.
\end{equation}

Indeed, on the one hand
$$
c(g,h)_\alpha = [g,h]^{\binom{\alpha}{2}}h^{-\alpha}g^{-\alpha}(gh)^\alpha.
$$
On the other hand, using F3) and the fact that the center of $G$ is an $R$-ideal (this also follows from F3)), and therefore the $R$-exponents of ordinary commutators lie in $Z(G)$, we obtain
$$
c(h,g)_\alpha = [h,g]^{\binom{\alpha}{2}}g^{-\alpha}h^{-\alpha}(hg)^\alpha = [h,g]^{\binom{\alpha}{2}}g^{-\alpha}h^{-\alpha}(gh[h,g])^\alpha =
$$
$$
[h,g]^{\binom{\alpha}{2}}g^{-\alpha}h^{-\alpha}(gh)^\alpha[h,g]^\alpha = [h,g]^{\binom{\alpha}{2}}h^{-\alpha}g^{-\alpha}[g^{-\alpha},h^{-\alpha}](gh)^\alpha[h,g]^\alpha =
$$
$$
[g,h]^{-\binom{\alpha}{2}}h^{-\alpha}g^{-\alpha}[g,h]^{\alpha^2}(gh)^\alpha[g,h]^{-\alpha} = [g,h]^{-\binom{\alpha}{2} +\alpha^2 -\alpha}h^{-\alpha}g^{-\alpha}(gh)^\alpha.
$$
Since $-\binom{\alpha}{2} +\alpha^2 -\alpha = \binom{\alpha}{2}$, we obtain
(\ref{eq:sym-c-comm}).

\medskip
\noindent
{\bf F14)} [{\it  Associativity of multiplication in a group}]  {\it Suppose that the function $(g,\alpha) \to g^\alpha$ satisfies F3). Then in the group $G$ for any $g,h \in G$, $\alpha \in R$ the following equality holds:}
\begin{equation} \label{eq:ass-c-comm}
 c(gh,f)_\alpha c(g,h)_\alpha =c(g,hf)_\alpha c(h,f)_\alpha.
\end{equation}

Indeed, for any $g,h,f \in G, \alpha \in R$ we have
$$
((gh)f)^\alpha = (g(hf))^\alpha.
$$
Writing both parts in terms of $\alpha$-commutators and exponents of elements $g,h,f$, after the appropriate reductions (using the fact that $(g,h)_\alpha \in Z(G)$, and this follows from F3)) we obtain
\begin{equation} \label{eq:ass}
 (gh,f)_\alpha (g,h)_\alpha =(g,hf)_\alpha  (h,f)_\alpha.
\end{equation}
From where, describing $\alpha$-commutators through the corresponding $c$-commutators and ordinary commutators according to the formula (\ref{eq:def-c-comm}), we obtain (\ref{eq:ass-c-comm}).

\medskip
\noindent
{\bf F15)} [{\it  Axiom 3) $(h^{-1}gh)^\alpha = h^{-1}g^\alpha h$ in terms of c-commutators.}] {\it Suppose that the function $(g,\alpha) \to g^\alpha$ satisfies F0) and F3). Then in the group $G$, Axiom 3) is equivalent to the following identity for $c$-commutators. For any $g,h \in G$, $\alpha \in R$, the following equality holds:}
\begin{equation} \label{eq:conj-c-comm}
   c(h^{-1}g,h)_\alpha =  c(h^{-1},g)_\alpha^{-1}.
\end{equation}

Indeed, let us first assume that Axiom 3) is satisfied in $G$ and write both sides of the equality $(h^{-1}gh)^\alpha = h^{-1}g^\alpha h$ in terms of the exponents of $g$ and $h$, ordinary commutators, and c-commutators. Then, on the one hand, using F3), we obtain
$$
h^{-1}g^\alpha h = g^\alpha g^{-\alpha}h^{-1}g^\alpha h = g^\alpha [g^\alpha,h] = g^\alpha [g,h]^\alpha.
$$
On the other hand, using F0) and F4) (which follows from F3)) we obtain
$$
(h^{-1}gh)^\alpha  = ((h^{-1}g)h)^\alpha = (h^{-1}g)^\alpha h^\alpha (h^{-1}g,h)_\alpha = (gh^{-1}[h^{-1},g])^\alpha h^\alpha (h^{-1}g,h)_\alpha
$$
$$
= g^\alpha h^{-\alpha} (g,h^{-1})_\alpha [h^{-1},g]^\alpha h^\alpha (h^{-1}g,h)_\alpha = g^\alpha [g,h]^\alpha (g,h^{-1})_\alpha  (h^{-1}g,h)_\alpha.
$$
Equating both sides, we obtain
$$
(g,h^{-1})_\alpha  (h^{-1}g,h)_\alpha = e.
$$
Rewriting this equality in terms of $c$-commutators according to the formula (\ref{eq:def-c-comm}) and using F13), which also follows from F3), we obtain (\ref{eq:conj-c-comm}).

Interestingly, the equality (\ref{eq:conj-c-comm}) can be obtained directly by rewriting $ c(h^{-1}g,h)_\alpha$ using F14).

\section{Group structure of tensor completions of finitely generated 2-nilpotent torsion-free groups}\label{se:main}

In this section we describe the algebraic structure of tensor completions $G \otimes_{\N_{2,R}} R$ of finitely generated 2-nilpotent torsion-free groups $G$ over a binomial domain $R$ of characteristic 0. For brevity, we denote $G \otimes R = G \otimes_{\N_{2,R}} R$ and $G\otimes_{\mathcal{H}} R = G\otimes_{\mathcal{H}_2} R$.

The identity map $G \to G$ canonically extends to an embedding $\eta: G \to G\otimes_{\mathcal{H}} R$. Moreover, since $G\otimes_{\mathcal{H}} R$ is an $R$-group, the embedding $\eta$ canonically extends to an $R$-epimorphism $\mu:G\otimes R \to G\otimes_{\mathcal{H}} R$. This epimorphism plays an essential role in our investigations.

Let $\bar u = (u_1, \ldots,u_m)$ and $\bar v = (v_1, \ldots,v_n)$ be bases of the free abelian groups $G/Z(G)$ and $Z(G)$, respectively. Then $\bar u \cdot \bar v = (u_1, \ldots,u_m,v_1, \ldots,v_n)$ is the Mal'tsev basis of $G$. Therefore, each element $g \in G$ can be uniquely represented as a product
$$
g = u_1^{\alpha_1} \ldots u_m^{\alpha_m}v_1^{\beta_1} \ldots v_n^{\beta_n} = \bar u^{\bar\alpha} \cdot \bar v^{\bar \beta},
$$
where $\bar \alpha = (\alpha_1, \ldots,\alpha_m), \bar \beta = (\beta_1, \ldots,\beta_n)$~ are some tuples of integers. By construction, each element of the Hall completion $G \otimes_{\mathcal{H}} R$ has a unique decomposition:

$$
g = u_1^{\alpha_1} \ldots u_m^{\alpha_m}v_1^{\beta_1} \ldots v_n^{\beta_n} = \bar u^{\bar\alpha} \cdot \bar v^{\bar \beta},
$$
where $\bar \alpha = (\alpha_1, \ldots,\alpha_m), \bar \beta = (\beta_1, \ldots,\beta_n)$ are some tuples of elements from $R$. Multiplication and $R$-exponentiation in
$G \otimes_{\mathcal{H}} R$ are defined by the same polynomials as in $G$ with respect to the basis $\bar u \cdot \bar v$.

\begin{lemma}
Let $G$~ be a finitely generated 2-nilpotent torsion-free group, and $R$~ a binomial domain of characteristic zero. Let $\bar u \cdot \bar v$~ be the Mal'tsev basis of $G$, as defined above. Then the set of elements
$$
H_{\bar u \cdot \bar v} = \{ \bar u^{\bar\alpha} \cdot \bar v^{\bar \beta} \mid \bar \alpha \in R^m, \bar \beta \in R^n\}
$$
forms a subgroup (not necessarily an $R$-subgroup) of the tensor $R$-completion $G \otimes R$.
\end{lemma}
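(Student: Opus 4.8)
The goal is to show that $H_{\bar u \cdot \bar v}$ is closed under multiplication and inversion; that it contains the identity is clear (take all $\bar\alpha, \bar\beta = 0$). The key observation is that $Z(G\otimes R)$ contains the $R$-span of $\bar v$ (indeed $\bar v$ lies in $Z(G)$, hence in the center of $G\otimes R$ since central elements stay central under tensor completion in $\N_{2,R}$, and raising to $R$-exponents preserves centrality by F5)), and on the central part the $R$-exponentiation is genuinely that of an $R$-module: for central $z, z'$ and $\alpha \in R$ we have $(zz')^\alpha = z^\alpha z'^\alpha$ by F0). So the $\bar v$-tail of any element of $H_{\bar u \cdot \bar v}$ behaves additively in the exponents, exactly as in $G\otimes_{\mathcal H} R$.

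\textbf{Step 1: Reduce multiplication to the $\bar u$-part.} Take two elements $a = \bar u^{\bar\alpha} \bar v^{\bar\beta}$ and $b = \bar u^{\bar\gamma}\bar v^{\bar\delta}$ of $H_{\bar u \cdot \bar v}$. Since $\bar v^{\bar\beta}$ is central, $ab = \bar u^{\bar\alpha}\bar u^{\bar\gamma}\bar v^{\bar\beta}\bar v^{\bar\delta} = (\bar u^{\bar\alpha}\bar u^{\bar\gamma})\bar v^{\bar\beta + \bar\delta}$, using additivity of exponents on the central module part. So it suffices to show that $\bar u^{\bar\alpha}\bar u^{\bar\gamma}$ lies in $H_{\bar u \cdot \bar v}$, i.e. can be rewritten in the normal form $\bar u^{\bar\epsilon}\bar v^{\bar\zeta}$ with $\bar\epsilon, \bar\zeta \in R^m, R^n$.

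\textbf{Step 2: Collect the $\bar u$-part.} Expand $\bar u^{\bar\alpha}\bar u^{\bar\gamma} = u_1^{\alpha_1}\cdots u_m^{\alpha_m}u_1^{\gamma_1}\cdots u_m^{\gamma_m}$ and collect the $u_i$'s to the front. Each transposition $u_j^{s} u_i^{r} = u_i^{r} u_j^{s} [u_j^{s}, u_i^{r}]$ for $j > i$ produces, by F3), a commutator factor $[u_j, u_i]^{rs}$, which lies in the $R$-span of $[u_j,u_i]$, hence in the $R$-module generated by $\bar v$ (since $[G,G] \le Z(G)$, each $[u_j,u_i]$ is a $\Z$-combination of the $v_k$, and $R$-exponents stay in the $R$-span of $\bar v$ by centrality). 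After collecting we get $u_i^{\alpha_i + \gamma_i}$ for each $i$ (using F8)/Axiom 2.1 on each single generator, which holds in the $R$-group $G\otimes R$) times a product of such central commutator powers; gathering all central factors and again using $R$-module additivity on the center, this equals $\bar u^{\bar\alpha + \bar\gamma}\bar v^{\bar\eta}$ for a suitable $\bar\eta \in R^n$. Hence $ab \in H_{\bar u \cdot \bar v}$.

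\textbf{Step 3: Inverses.} For $a = \bar u^{\bar\alpha}\bar v^{\bar\beta}$, note $a^{-1} = \bar v^{-\bar\beta}(\bar u^{\bar\alpha})^{-1} = \bar v^{-\bar\beta}u_m^{-\alpha_m}\cdots u_1^{-\alpha_1}$ (Axiom 2.1 gives $(u_i^{\alpha_i})^{-1} = u_i^{-\alpha_i}$). Now collect this back into normal form by the same transposition argument as in Step 2; the central $\bar v$-part absorbs all commutator corrections. So $a^{-1} \in H_{\bar u \cdot \bar v}$.

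\textbf{Main obstacle.} The delicate point is justifying that the collecting process is well-defined and terminates with exponents genuinely in $R$ — i.e. that the naive formal manipulation mirrors actual identities in $G\otimes R$. This rests on: (i) $\bar v$ being central in $G\otimes R$ and its $R$-span being a genuine $R$-submodule (so central exponents add), which comes from F0) and F5); (ii) the commutator identity $[g^\alpha, h] = [g,h]^\alpha$ from F3), valid because $G\otimes R \in \N_{2,R}$; and (iii) $[G\otimes R, G\otimes R]$ being contained in the $R$-span of $\bar v$, which follows since $[u_i,u_j] \in Z(G)$ is a $\Z$-combination of the $v_k$ and F3) keeps its $R$-powers in the $R$-span. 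Once these are in place, the collection is a finite rewriting with all arising exponents lying in $R$, and closure follows. No appeal to $H_{\bar u \cdot \bar v}$ being an $R$-subgroup is needed — and indeed it generally is not, since $(\bar u^{\bar\alpha})^\lambda$ need not return to normal form with $R$-exponents; that is precisely the phenomenon the rest of the paper analyzes.
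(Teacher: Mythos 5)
Your proposal is correct and follows essentially the same route as the paper: reduce to the $\bar u$-part via centrality of the $\bar v$-factors, collect using $[u_i^{\alpha_i},u_j^{\gamma_j}]=[u_i,u_j]^{\alpha_i\gamma_j}$ together with the fact that each $[u_i,u_j]$ is a $\Z$-combination of the $v_k$ whose $R$-powers stay in the central $\bar v$-span, and then handle inverses by rewriting $u_m^{-\alpha_m}\cdots u_1^{-\alpha_1}$ back into normal form. Your explicit justification of the supporting facts (F0), F3), centrality of the $\bar v$-span) is a slightly more detailed version of what the paper takes for granted, but there is no substantive difference.
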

\begin{proof}
Let us take two elements $g = \bar u^{\bar\alpha} \cdot \bar v^{\bar \beta}$ and $h = \bar u^{\bar\gamma} \cdot \bar v^{\bar \delta}$ from $H_{\bar u \cdot \bar v}$. Then
$$
gh = u_1^{\alpha_1} \ldots u_m^{\alpha_m}v_1^{\beta_1} \ldots v_n^{\beta_n} u_1^{\gamma_1} \ldots u_m^{\gamma_m}v_1^{\delta_1} \ldots v_n^{\delta_n}.
$$

To show that $gh \in H_{\bar u \cdot \bar v}$, we use the standard collecting process in 2-nilpotent groups, which also works in 2-nilpotent $R$-groups.
Since the elements $v_i^{\beta_i}$ and $v_i^{\delta_i}$ are in the center of $G \otimes R$, then
\begin{equation} \label{eq:H1}
gh = \bar u^{\bar \alpha} \bar u^{\bar \gamma} \bar v^{\bar \beta +\bar \delta}.
\end{equation}

Consider $\bar u^{\bar \alpha} \bar u^{\bar \gamma}$. Using the identity $u_i^{\alpha_i}u_j^{\gamma_j} = u_j^{\gamma_j} u_i^{\alpha_i }[u_i^{\alpha_i },u_j^{\gamma_j}]$, we can rewrite $\bar u^{\bar \alpha} \bar u^{\bar \gamma}$ as follows:
\begin{equation} \label{eq:H2}
\bar u^{\bar \alpha} \bar u^{\bar \gamma} = \bar u^{\bar \alpha +\bar \gamma} \Pi_{i>j}
[u_i^{\alpha_i },u_j^{\gamma_j}].
\end{equation}
Since the identity $[u_i^{\alpha_i },u_j^{\gamma_j}] = [u_i,u_j]^{\alpha_i \gamma_j}$ holds in every 2-nilpotent $R$-group, we can rewrite (\ref{eq:H2}) as
\begin{equation}\label{eq:H3}
\bar u^{\bar \alpha} \bar u^{\bar \gamma} = \bar u^{\bar \alpha +\bar \gamma} \Pi_{i>j}[u_i,u_j]^{\alpha_i \gamma_j}.
\end{equation}
Note that $[u_i,u_j] \in Z(G)$, hence $[u_i,u_j] = \bar v^{\bar k(i,j)}$, where $\bar k(i,j)$ is a tuple from $\Z^n$. Therefore, $[u_i,u_j]^{\alpha_i \gamma_j} = \bar v^{\bar k(i,j)\alpha_i \gamma_j}$, where $\bar k(i,j)\alpha_i \gamma_j$ is a tuple obtained from $\bar k(i,j)$ by scalar multiplication with $\alpha_i \gamma_j$. Substituting this into (\ref{eq:H3}), we obtain
\begin{equation} \label{eq:H4}
\bar u^{\bar \alpha} \bar u^{\bar \gamma} = \bar u^{\bar \alpha +\bar \gamma} \Pi_{i>j}
\bar v^{\bar k(i,j)\alpha_i \gamma_j} = \bar u^{\bar \alpha +\bar \gamma} \bar v^{\Sigma_{i>j}\bar k(i,j)\alpha_i \gamma_j} = \bar u^{\bar \alpha +\bar \gamma} \bar v^{\sigma(\bar \alpha, \bar \beta)},
\end{equation}
where $\sigma(\bar \alpha,\bar \beta)$ is some tuple from $R^m$ depending on $\bar \alpha$ and $\bar \beta$. Now, using (\ref{eq:H4}), we can rewrite (\ref{eq:H1}) as

\begin{equation}\label{eq:H5}
gh =\bar u^{\bar \alpha +\bar \gamma} \bar v^{\sigma(\bar \alpha, \bar \beta)}\bar v^{\bar \beta +\bar \delta} =\bar u^{\bar \alpha +\bar \gamma} \bar v^{\sigma(\bar \alpha, \bar \beta) + \bar \beta +\bar \delta},
\end{equation}
which shows that $H_{\bar u \cdot \bar v}$ is closed under multiplication. To prove closure under inversion, note that
$$
g^{-1} = (u_m^{\alpha_m})^{-1}\ldots (u_1^{\alpha_1})^{-1} = u_m^{-\alpha_m} \ldots u_1^{-\alpha_1}.
$$
The product above belongs to $H_{\bar u \cdot \bar v}$, since $H_{\bar u \cdot \bar v}$ is closed under multiplication.

This proves the lemma.
\end{proof}

\begin{remark}
The subgroup $H_{\bar u \cdot \bar v}$ is not always an $R$-subgroup of $G \otimes R$ (as will be shown below).
\end{remark}

To study tensor completions of $G \otimes R$, we need the following statement.

\begin{theorem} \label{th:tensor-completions}
	Let $G$ be a finitely generated 2-nilpotent torsion-free group, $R$ be a binomial domain of characteristic zero, and $\mu:G \otimes R \to G \otimes_{\mathcal{H}} R$ be the canonical $R$-epimorphism. Let $\bar u \cdot \bar v$ be the Mal'tsev basis of $G$, and $H_{\bar u \cdot \bar v}$ be a subgroup of $G \otimes R$, as above. Then the following holds:
	\begin{itemize}
		\item [1)] The restriction of $\mu$ to $H_{\bar u \cdot \bar v}$ is an isomorphism of abstract groups $H_{\bar u \cdot \bar v}$ onto $G \otimes_{\mathcal{H}} R$. In particular, $G \otimes_{\mathcal{H}} R$ embeds in $G \otimes R$ as an abstract group, and $\mu$ is a retraction to $G \otimes_{\mathcal{H}} R$, viewed as a subgroup $H_{\bar u \cdot \bar v}$ of $G \otimes R$.
		\item [2)] The kernel $D = \ker \mu$ of $\mu$ is an abelian $R$-subgroup of $G \otimes R$ generated as an $R$-group by all c-commutators $c(g,h)_\alpha, g,h \in G \otimes R$, $\alpha \in R$.
		\item [3)] $G \otimes R \simeq (G \otimes_{\mathcal{H}} R) \times D$ as a direct product of abstract groups.
	\end{itemize}
\end{theorem}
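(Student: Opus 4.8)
The plan is to establish the three items in turn; item~2) carries all the weight, while items~1) and~3) follow formally from it together with the uniqueness of Mal'tsev normal forms. For item~1), note that $\mu$ is an $R$-homomorphism extending $\eta$, so $\mu(\lambda(u_i))=u_i$ and $\mu(\lambda(v_j))=v_j$; since $\mu$ commutes with $R$-exponentiation and with the group operations, $\mu(\bar u^{\bar\alpha}\bar v^{\bar\beta})=\bar u^{\bar\alpha}\bar v^{\bar\beta}$ in $G\otimes_{\mathcal{H}}R$ for every $\bar\alpha\in R^m$, $\bar\beta\in R^n$. Hence $\mu$ maps $H_{\bar u\cdot\bar v}$ onto $G\otimes_{\mathcal{H}}R$ (every element of the latter is in such a normal form), and it is injective on $H_{\bar u\cdot\bar v}$ because equality of normal forms in $G\otimes_{\mathcal{H}}R$ forces equality of the coordinate tuples, hence of the corresponding elements of $H_{\bar u\cdot\bar v}$. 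A bijective homomorphism is an isomorphism, so $\mu$ restricts to an isomorphism $H_{\bar u\cdot\bar v}\to G\otimes_{\mathcal{H}}R$, and $\mu$ is a retraction onto this copy of $G\otimes_{\mathcal{H}}R$ inside $G\otimes R$.

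For item~2), put $D_0=\langle\, c(g,h)_\alpha \mid g,h\in G\otimes R,\ \alpha\in R\,\rangle_R$. Since $G\otimes R$ is a genuine $\N_{2,R}$-group, its exponentiation satisfies F0) and F3), so F4) gives $c(g,h)_\alpha\in Z(G\otimes R)$, and by F5) the center is an $R$-ideal; therefore $D_0\subseteq Z(G\otimes R)$ is an abelian central $R$-subgroup, and it is even an $R$-ideal since $(g,h)_\alpha=[g,h]^{-\binom{\alpha}{2}}c(g,h)_\alpha$ shows $(g,h)_\alpha\in D_0$ whenever $[g,h]\in D_0$. Next, $D_0\subseteq\ker\mu$: in the Hall group $G\otimes_{\mathcal{H}}R$ one has $c(g,h)_\alpha=e$ for all $g,h,\alpha$ (this is the defining property of the $c$-commutator; cf.\ equations~(\ref{eq:def-c-comm})--(\ref{eq:exp-c-comm}) and~(\ref{eq:Hall-commutator}) with $\tau_2(g,h)=[g,h]$), and since $\mu$ is an $R$-homomorphism, $\mu(c(g,h)_\alpha)=c(\mu(g),\mu(h))_\alpha=e$. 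Now consider the $R$-epimorphism $\pi\colon G\otimes R\to Q:=(G\otimes R)/D_0$; as $D_0$ is an $R$-ideal we have $Q\in\N_{2,R}$, and all two-variable $c$-commutators vanish in $Q$. A routine induction on the number of variables (the collecting process, using $(gh)^\alpha=g^\alpha h^\alpha[g,h]^{-\binom{\alpha}{2}}$ in $Q$ and the class-$2$ expansion of $[x_1\cdots x_n,x_{n+1}]$) then shows that $Q$ satisfies Axiom~5), so $Q\in\mathcal{H}_{2,R}$. Because $D_0\subseteq\ker\mu$, the map $\mu$ factors as $\mu=\bar\mu\circ\pi$ with $\bar\mu\colon Q\to G\otimes_{\mathcal{H}}R$ an $R$-epimorphism. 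Applying the universal property of the Hall completion (Section~\ref{se:tensor-completions}) to the homomorphism $\pi\circ\lambda\colon G\to Q$ yields an $R$-homomorphism $\nu\colon G\otimes_{\mathcal{H}}R\to Q$ with $\nu\circ\eta=\pi\circ\lambda$. Then $\bar\mu\circ\nu$ is an $R$-endomorphism of $G\otimes_{\mathcal{H}}R$ with $(\bar\mu\circ\nu)\circ\eta=\bar\mu\circ\pi\circ\lambda=\mu\circ\lambda=\eta$, so $\bar\mu\circ\nu=\id$ by uniqueness in the universal property; and $\nu\circ\bar\mu$ agrees with $\id_Q$ on $\pi(\lambda(G))$, which $R$-generates $Q$, so $\nu\circ\bar\mu=\id_Q$ (two $R$-homomorphisms agreeing on an $R$-generating set coincide). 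Hence $\bar\mu$ is an isomorphism and $D:=\ker\mu=\ker\pi=D_0$, which is exactly item~2).

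Finally, for item~3): by item~2), $D=\ker\mu$ is central in $G\otimes R$ (and normal, being a kernel). By item~1), $\mu|_{H_{\bar u\cdot\bar v}}$ is injective, so $H_{\bar u\cdot\bar v}\cap D=\{e\}$; it is also surjective, so for each $g\in G\otimes R$ there is $h\in H_{\bar u\cdot\bar v}$ with $\mu(g)=\mu(h)$, whence $gh^{-1}\in D$ and $G\otimes R=H_{\bar u\cdot\bar v}D$. Since $D$ is central, $H_{\bar u\cdot\bar v}$ is then normal as well, and therefore $G\otimes R=H_{\bar u\cdot\bar v}\times D\cong(G\otimes_{\mathcal{H}}R)\times D$ as abstract groups. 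The main obstacle is the middle part of item~2): showing that $Q=(G\otimes R)/D_0$ genuinely lies in $\mathcal{H}_{2,R}$ (i.e.\ that the vanishing of all two-variable $c$-commutators upgrades to the full Hall--Petresco identity) and that the induced map $\bar\mu$ is an honest isomorphism rather than merely an $R$-epimorphism --- this is where the $c$-commutator calculus of Section~\ref{sec:commutators} and the universal properties of both tensor completions must be combined carefully. Everything else is bookkeeping with Mal'tsev normal forms and the identities F0)--F15).
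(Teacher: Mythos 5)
Your argument is correct, and items 1) and 3) run essentially as in the paper, but your treatment of the key item 2) is a genuinely different route. The paper never forms the quotient $(G\otimes R)/D$: it first proves the decomposition $G\otimes R=H_{\bar u\cdot\bar v}\cdot D$ directly, by induction on the number of exponentiations in an $R$-word $w(g_1,\ldots,g_k)$ representing an element (a collection argument through the Mal'tsev basis, using $(fh)^\lambda=f^\lambda h^\lambda[f,h]^{-\binom{\lambda}{2}}c(f,h)_\lambda$ and centrality of commutators), and only then deduces $D=\ker\mu$ and the direct product from the retraction $\mu$. You reverse the order: you prove $\ker\mu=D_0$ first, by showing $D_0$ is a central $R$-ideal, passing to $Q=(G\otimes R)/D_0$, checking $Q\in\mathcal{H}_{2,R}$, and identifying $Q$ with $G\otimes_{\mathcal{H}}R$ by playing the universal properties of the two completions against each other; the product decomposition then falls out formally. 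Your route is more conceptual, confines the finitely generated torsion-free hypothesis (and the Mal'tsev normal forms) to item 1), and exhibits $\mu$ cleanly as the quotient by the c-commutator module; the paper's route is more constructive, and its normal-form induction is not wasted --- it is reused in Section \ref{sec:6} to define the exponentiation $Exp$ and to derive formulas (\ref{eq:general-formula}) and (\ref{eq:long-c}). Two steps you label as routine do deserve to be written out, though both are sound within the paper's framework: the upgrade from vanishing of two-variable c-commutators in $Q$ to the full $n$-variable Hall--Petresco identity is exactly the computation behind (\ref{eq:general-formula})--(\ref{eq:long-c}) together with $\tau_2(x_1,\ldots,x_n)=\prod_{i=1}^{n-1}[x_1\cdots x_i,x_{i+1}]$; and the well-definedness of the induced exponentiation on $Q$ should be noted explicitly --- it follows from Definition \ref{def:1.5} because for $n\in D_0$ one has $[g,n]\in D_0$, hence $(g,n)_\alpha\in D_0$ and $(gn)^\alpha\equiv g^\alpha \bmod D_0$. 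You also implicitly use that the concrete Mal'tsev-basis construction of $G\otimes_{\mathcal{H}}R$ is the tensor completion in $\mathcal{H}_{2,R}$, which the paper asserts in Section \ref{se:tensor-completions}, so that appeal is legitimate.
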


\begin{proof}
	Note that the $R$-epimorphism $\mu: G\otimes R \to G \otimes_{\mathcal{H}} R$ is the identity on $G$, hence it maps $\bar u^{\bar\alpha} \cdot \bar v^{\bar \beta}$ of $G \otimes R$ to $\bar u^{\bar\alpha} \cdot \bar v^{\bar \beta}$ of $G \otimes_{\mathcal{H}} R$. Consequently, the restriction of $\mu$ to $H_{\bar u \cdot \bar v}$ is an isomorphism of abstract groups $H_{\bar u \cdot \bar v} \to G \otimes_{\mathcal{H}} R$. This proves 1).

	Therefore, on the one hand
	$$
	\mu((xy)^\lambda) = (\mu(x)\mu(y))^\lambda= \mu(x)^\lambda \mu(y)^\lambda [\mu(x),\mu(y)]^{-\binom{\lambda}{2}}.
	$$
	On the other hand
	$$
	\mu((xy)^\lambda) = \mu(x^\lambda y^\lambda [x,y]^{-\binom{\lambda}{2}} c(x,y)_\lambda)= \mu(x)^\lambda \mu(y)^\lambda [\mu(x),\mu(y)]^{-\binom{\lambda}{2}} \mu(c(x,y)_\lambda).
	$$
	It follows that $\mu(c(x,y)_\lambda) = e$, so $c(x,y)_\lambda \in \ker \mu$ for any $x,y \in G\otimes R$ and $\lambda \in R$.
    Denote by $D$ the $R$-subgroup of $G \otimes R$ generated by all c-commutators $c(x,y)_\lambda$, i.e.
	$$
	D = \langle c(x,y)_\lambda \mid x,y \in G\otimes R, \lambda \in R\rangle_R.
	$$
	It turns out that $D$ is a central $R$-subgroup of $G \otimes R$ and $D \leq \ker \mu$.
    We claim that $D = \ker \mu$ and $G\otimes R = H_{\bar u \cdot \bar v} \times D$ as abstract groups. Since $\mu$ is a retract onto $H_{\bar u \cdot \bar v}$ and $D \leq Z(G\otimes R)$, it suffices to show that $G\otimes R = H_{\bar u \cdot \bar v}\cdot D$.
	
	Now note that the group $G \otimes R$ \ $R$-generated by the group $G$. Therefore, each element $g \in G \otimes R$ can be represented as
	$$
	g = w(g_1, \ldots,g_k),
	$$
	where $w(z_1, \ldots,z_k)$ is an $R$-word and $g_1, \ldots,g_k \in G$. We prove by induction on the number $e(w)$ of exponentiations in $w$ that $g = h\cdot d$, where $h \in H_{\bar u \cdot \bar v}$ and $d \in D$. If $e(w) = 0$, then $g \in G \leq H_{\bar u \cdot \bar v}$. Since $H_{\bar u \cdot \bar v}\cdot D$ is closed under multiplication, it suffices to consider the case when
	$$
	g = (f_1^{\gamma_1} \ldots f_k^{\gamma_k})^\lambda,
	$$
	where $f_1, \ldots, f_k\in G \otimes R$ and $\gamma_1, \ldots, \gamma_k, \lambda \in R$.
    By induction, $f_i^{\alpha_i} \in H_{\bar u \cdot \bar v} \cdot D$, so their product also belongs to $H_{\bar u \cdot \bar v} \cdot D$. Therefore, we can assume that
	$$
	g = (\bar u^{\bar\alpha} \bar v^{\bar \beta} d)^\lambda = (\bar u^{\bar\alpha})^\lambda \bar v^{\bar \beta \lambda} d^\lambda,
	$$
	where $d \in D$, and $\bar \beta \lambda$ is the scalar product of $\bar \beta$ and $ \lambda$. Therefore, it suffices to show that $(\bar u^{\bar\alpha})^\lambda = (u_1^{\alpha_1}\ldots u_m^{\alpha_m})^\lambda\in H_{\bar u \cdot \bar v}\cdot D$. We use induction on $m$. Denote $f = u_1^{\alpha_1}\ldots u_{m-1}^{\alpha_{m-1}}$. Then
	$$
	(u_1^{\alpha_1}\ldots u_m^{\alpha_m})^\lambda = f^\lambda (u_m^{\alpha_m})^\lambda [f,u_m^{\alpha_m}]^{-\binom{\lambda}{2}} c(f,u_m^{\alpha_m})_\lambda.
	$$
By induction, $f^\lambda \in H_{\bar u \cdot \bar v} \cdot D$. Obviously, $(u_m^{\alpha_m})^\lambda = u_m^{\alpha_m\lambda} \in H_{\bar u \cdot \bar v}$ and $ c(f,u_m^{\alpha_m})_\lambda \in D$. Note that
$$
[f,u_m^{\alpha_m}]^{-\binom{\lambda}{2}} = [f,u_m]^{-\binom{\lambda}{2}\alpha_m}
$$
and
$$
[f,u_m] = [u_1^{\alpha_1}\ldots u_{m-1}^{\alpha_{m-1}},u_m] = \prod_{i=1}^{m-1} [u_i,u_m]^{\alpha_i},
$$
from which it follows that
$$
[f,u_m]^{-\binom{\lambda}{2}\alpha_m} = \left(\prod_{i=1}^{m-1} [u_i,u_m]^{\alpha_i}\right)^{-\binom{\lambda}{2}\alpha_m} = \prod_{i=1}^{m-1} [u_i,u_m]^{-\alpha_i\binom{\lambda}{2}\alpha_m}.
$$
Since $[u_i,u_m] \in Z(G)$, then
$$
[u_i,u_m]^{-\alpha_i\binom{\lambda}{2}\alpha_m} = \bar v^{\bar \delta_{i,j}}
$$
for some tuple $\bar \delta_{i,j} \in R^n$. Therefore,
$$
\prod_{i=1}^{m-1} [u_i,u_m]^{-\alpha_i\binom{\lambda}{2}\alpha_m} = \prod_{i=1}^{m} \bar v^{\bar \delta_{i,j}} = \bar v^{\delta}
$$
for some $\delta \in R^n$. This proves that $[f,u_m^{\alpha_m}]^{-\binom{\lambda}{2}} \in H_{\bar u \cdot \bar v}$ and, consequently, that $(\bar u^{\bar\alpha})^\lambda = (u_1^{\alpha_1}\ldots u_m^{\alpha_m})^\lambda\in H_{\bar u \cdot \bar v}\cdot D$. We obtain that $g = (f_1^{\gamma_1} \ldots f_k^{\gamma_k})^\lambda \in H_{\bar u \cdot \bar v}\cdot D$, and hence by induction $g = w(g_1, \ldots,g_k) \in H_{\bar u \cdot \bar v}\cdot D$. Consequently, $G \otimes R =H_{\bar u \cdot \bar v}\cdot D$ and $D = \ker \mu$. Whence
$H_{\bar u \cdot \bar v} \cap D = \{e\}$ and $G \otimes R =H_{\bar u \cdot \bar v}\times D$. This proves the theorem.
\end{proof}


\section{On the \texorpdfstring{$R$}{}-structure of tensor \texorpdfstring{$R$}{}-completions of torsion-free nilpotent groups}
\label{sec:6}

In this section, we provide a general description of tensor completions $G\otimes R$ for arbitrary 2-nilpotent finitely generated torsion-free groups $G$ over arbitrary binomial integral domains $R$. This description can be significantly refined for specific groups $G$ and rings $R$. We do this for free 2-nilpotent groups of rank 2 and special rings in the next section. The important point here is that the general approach, although clear, is technically challenging; its full implementation depends on the given group $G$ and the given ring $R$, which is beyond the scope of this paper.

Let $G$ be an arbitrary 2-nilpotent finitely generated torsion-free group and $R$ be an arbitrary binomial domain. By Theorem \ref{th:tensor-completions}, we know the group structure of $G\otimes R$, namely,
$$
G\otimes R \simeq (G\otimes_{\mathcal{H}} R) \times D,
$$
where $G\otimes_{\mathcal{H}} R$ is the Hall completion of $G$ over $R$, and $D$ is the abelian $R$-subgroup ($R$-module) of the $R$-group $G\otimes R$ generated by all $c$-commutators in $G\otimes R$. Thus, we know the multiplication in $G\otimes R$. Now we need to understand what the $R$-exponentiation is in the group $G\otimes R$.

To define the $R$-exponentiation on an arbitrary element $g \in G\otimes R$, we decompose $g$ into a product $g = h d$, where $h \in G\otimes_{\mathcal{H}} R, d \in D$. Since $D$ is a central $R$-subgroup of $G\otimes R$, for any $\alpha \in R$, we must have $(hd)^\alpha = h^\alpha d^\alpha$. Therefore, to define the $R$-exponentiation on $G\otimes R$, it suffices to define it on the subgroups $G\otimes_{\mathcal{H}} R$ and $D$.

\medskip
\noindent
{\bf $R$-exponentiation on $D$.} Since $D$ is an abelian (even central) $R$-subgroup of $G\otimes R$, the $R$-exponentiation in $D$ is defined as in an $R$-module. Therefore, to understand how $R$ acts on $D$, we need to understand the structure of the $R$-module $D$. To do this, it suffices to take the set of generators $C= \{c(g,h)_\alpha \mid g, h \in G\otimes R, \alpha \in R\}$ of the module $D$, find the set of defining relations $S$ for $C$ in $D$, and take the quotient module $L/S_R$ of the free $R$-module $L = \bigoplus\limits_{c \in C} cR$ with basis $C$ by the $R$-submodule $S_R = \langle S\rangle_R$ of module $L$, $R$-generated by $S$. Clearly, $D \simeq L/S_R$ and the $R$-exponentiation in $D$ is induced from the $R$-module $L$. As a set of defining relations for $S$, we can take all the relations on $c$-commutators described in section \ref{sec:commutators} and prove that they are defining as we construct an $R$-exponentiation on $G \otimes R$.

\medskip
\noindent
{\bf $R$-exponentiation on $G \otimes_{\mathcal{H}} R$.} To define the $R$-exponentiation on the subgroup $G\otimes_{\mathcal{H}} R$, we have a formula (\ref{eq:exp-c-comm}) from section \ref{sec:commutators}, according to which for $x, y \in G\otimes_{\mathcal{H}} R, \alpha \in R$ we have:
$$
(xy)^\alpha = x^\alpha y^\alpha [x,y]^{-\binom{\alpha}{2}} c(x,y)_\alpha.
$$
For convenience, we generalize this formula for the product of arbitrary $n$ elements $x_1, \ldots, x_n$ from the group $G\otimes_{\mathcal{H}} R$:
\begin{equation} \label{eq:general-c}
(x_1 \ldots x_n)^\alpha = x_1^\alpha \ldots x_n^\alpha \tau_2(x_1, \ldots,x_n)^{-\binom{\alpha}{2}} c(x_1, \ldots,x_n)_\alpha,
\end{equation}
where $(x_1 \ldots x_n)^\alpha = x_1^\alpha \ldots x_n^\alpha \tau_2(x_1, \ldots,x_n)^{-\binom{\alpha}{2}} $ is the Hall formula for exponentiation in the Hall group $G \otimes_{\mathcal{H}} R$ (here $\tau_2(x_1, \ldots,x_n)$ is the corresponding Petresco word from Axiom 5 of $R$-exponential groups, see Section \ref{se:2.1}), and
$c(x_1, \ldots,x_n)_\alpha$ is the unique element from $D$ satisfying this equality.

By performing the $R$-exponentiation in (\ref{eq:general-c}) step by step, we can rewrite this formula as follows:
$$
(x_1 \ldots x_n)^\alpha = ((x_1 \ldots x_{n-1})x_n)^\alpha = (x_1 \ldots x_{n-1})^\alpha x_n^\alpha [x_1 \ldots x_{n-1},x_n]^{-\binom{\alpha}{2}} c(x_1 \ldots x_{n-1},x_n)_\alpha.
$$
Now, expanding in the same way,
$$
(x_1 \ldots x_{n-1})^\alpha = (x_1 \ldots x_{n-2})^\alpha x_{n-1}^\alpha [x_1 \ldots x_{n-2},x_{n-1}]^{-\binom{\alpha}{2}} c(x_1 \ldots x_{n-2},x_{n-1})_\alpha
$$
and then by induction, we obtain
\begin{equation} \label{eq:general-formula}
(x_1 \ldots x_n)^\alpha = \Pi_{i = 1}^n x_i^\alpha \Pi_{i = 1}^{n-1}[x_1\ldots x_{i},x_{i+1}]^{-\binom{\alpha}{2}} \Pi_{i = 1}^{n-1} c(x_1\ldots x_i,x_{i+1})_\alpha.
\end{equation}
Hence, $\tau_2(x_1, \ldots,x_n) = \Pi_{i = 1}^{n-1}[x_1\ldots x_{i},x_{i+1}]$, which is well known, and
\begin{equation} \label{eq:long-c}
c(x_1, \ldots,x_n)_\alpha = \Pi_{i = 1}^{n-1} c(x_1\ldots x_i,x_{i+1})_\alpha,
\end{equation}
which we will use below.

Note that since multiplication in $G \otimes R$ is associative, we can calculate the $\alpha$-exponent of the product $(x_1 \ldots x_n)^\alpha$ by placing the parentheses differently, but the result will be the same. This yields one type of relation on $c$-commutators. For $n = 3$, these relations are written out in the formula (\ref{eq:ass-c-comm}):
$$
c(gh,f)_\alpha=c(g,hf)_\alpha c(g,h)_\alpha^{-1} c(h,f)_\alpha .
$$
Clearly, using this formula, we can obtain other relations of this type. Furthermore, defining the $R$-exponentiation on $G\otimes R$ via elements of the type $(x_1 \ldots x_n)^\alpha$, it is necessary to ensure that the exponentiation is well-defined since the same element in a group  $G \otimes_{\mathcal{H}} R$ can still be represented by different products $x_1 \ldots x_n$.  To ensure that the $R$-exponentiation is defined correctly, one can either impose new relations on the $c$-commutators each time or, which is much more convenient, represent elements of $G\otimes_{\mathcal{H}} R$ in advance in a unique form through the chosen generators. To this end, recall that the subgroup $G\otimes_{\mathcal{H}} R$ appears in $G$ as the set of all products of the form
\begin{equation}
   g = u_1^{\alpha_1} \ldots u_m^{\alpha_m}v_1^{\beta_1} \ldots v_n^{\beta_n}, 
\end{equation}
where $(u_1, \ldots, u_m)$ is a basis of the free abelian group $G/Z(G)$, $(v_1, \ldots,v_n)$ is a basis of the free abelian group $Z(G)$, and $\alpha_1, \ldots,\alpha_m, \beta_1, \ldots,\beta_n$ are arbitrary elements of $R$ (see section \ref{se:main}). This product is unique for each element $g \in G$. Thus, it suffices for us to define in the group $G\otimes R$ exponents of the form
$$
 (u_1^{\alpha_1} \ldots u_m^{\alpha_m}v_1^{\beta_1} \ldots v_n^{\beta_n})^\alpha.
$$
Since $v_i^{\beta_i} \in Z(G)$, as before
$$
 (u_1^{\alpha_1} \ldots u_m^{\alpha_m}v_1^{\beta_1} \ldots v_n^{\beta_n})^\alpha = (u_1^{\alpha_1} \ldots u_m^{\alpha_m})^\alpha v_1^{\beta_1\alpha} \ldots v_n^{\beta_n\alpha},
$$
that is, in the end, we only need to determine the exponents of the form
$$
(u_1^{\alpha_1} \ldots u_m^{\alpha_m})^\alpha,
$$
which is what we do according to the formula (\ref{eq:general-formula}), where $x_i = u_i^{\alpha_i}$, and we set $(u_i^{\alpha_i})^\alpha = u_i^{\alpha_i\alpha}$. So, raising to the exponent $\alpha \in R$ for an element
$$
g=  u_1^{\alpha_1} \ldots u_m^{\alpha_m}v_1^{\beta_1} \ldots v_n^{\beta_n}d,
$$ 
where $\alpha_i, \beta_j \in R$, $d \in D$, is determined by the formula
\begin{equation} \label{eq:main-def}
(u_1^{\alpha_1} \ldots u_m^{\alpha_m}v_1^{\beta_1} \ldots \ldots v_n^{\beta_n}d)^\alpha =     u_1^{\alpha_1\alpha} \ldots u_m^{\alpha_m \alpha}v_1^{\beta_1\alpha+\sigma_1} \ldots v_n^{\beta_n\alpha +\sigma_n}d^\alpha c(u_1^{\alpha_1}, \ldots,u_m^{\alpha_m})_\alpha,
\end{equation}
where 
\begin{equation} \label{eq:main-def-2}
    v_1^{\sigma_1} \ldots v_n^{\sigma_n} = \tau_2(u_1^{\alpha_1}, \ldots,u_m^{\alpha_m})^{-\binom{\alpha}{2}},
\end{equation}
and, according to the formula (\ref{eq:long-c}),
\begin{equation} \label{eq:main-def-3}
    c(u_1^{\alpha_1}, \ldots,u_m^{\alpha_m})_\alpha = \Pi_{i = 1}^{n-1} c(u_1^{\alpha_1}\ldots u_i^{\alpha_i},u_{i+1}^{\alpha_{i+1}})_\alpha.
\end{equation}
Let us denote by $Exp: (G \otimes R) \times R \to G \otimes R$ the exponentiation introduced in this way.

Now we have to ensure that $Exp$ satisfies all Axioms 1)-4) of $R$-groups. To do this, we add to the defining relations of $D$ all relations from F0)-F15) on c-commutators from section \ref{sec:commutators}, in particular, all relations that correspond to Axioms 1)-4). The result is an $R$-group, denoted by $H$, which, as an abstract group, is isomorphic to $G \otimes R$; contains $G$ as a subgroup, and is generated by it as an $R$-group. Moreover, all $R$-relations ($R$-words) that hold in $H$ also hold in the $R$-group $G \otimes R$, which follows from the construction of the $R$-exponentiation in $H$. Thus, we have an $R$-epimorphism $H \to G\otimes R$. From the universal properties of the tensor completion $G \otimes R$, it follows that there is an inverse  $R$-epimorphism $G \otimes R \to H$, and therefore $H$ and $G \otimes R$ are $R$-isomorphic.

Note that some relations obtained from F0)-F15) are not independent, as they are consequences of the others. However, this does not affect the final structure of the module $D$, and in some cases, they simplify the proofs. We will now make some useful remarks about the added relations. To do this, we will use formulas F0)-F15) from section \ref{sec:commutators} and, for convenience, we will decompose the entire construction into appropriate steps E0) - E15) below, following the strategy described above.

\medskip
\noindent
Е0) Note that by construction (see (\ref{eq:main-def})) $Exp$ satisfies property F0).

\medskip
\noindent
E1) To ensure F1), we set $c(g,h)_r = e$ for all $c$-commutators $c(g,h)_r$ from $C$ such that $r \in \mathbb{Q}\cap R$.

\medskip
\noindent
E3) $Exp$ satisfies F3) by construction.

\medskip
\noindent
E4) $Exp$ satisfies F4) by construction.

\medskip
\noindent
E5) $Exp$ satisfies F5) by construction.

\medskip
\noindent
E6) $Exp$ satisfies F6) since $Exp$ satisfies F0) and F3).

\medskip
\noindent
E0) + E3)  From the satisfiability of conditions F0) and F3) it follows that for any $g, h, g_1, h_1 \in G\otimes R, \lambda \in R$ if $gh = g_1h_1$ modulo the commutator subgroup $H$ of the group $G\otimes R$ (we write this as $gh \underset{H}{=} g_1h_1$), then
$$
c(g,h)_\lambda =c(g_1,h_1)_\lambda.
$$
Indeed, on the one hand
$$
(gh)^\lambda \underset{H}{=} g^\lambda h^\lambda c(g,h)_\lambda,
$$
and on the other hand
$$
(gh)^\lambda \underset{H}{=} (g_1h_1)^\lambda \underset{H}{=} g_1^\lambda h_1^\lambda c(g_1,h_1)_\lambda.
$$
From where (see the argument in the proof of E10' below) we obtain what is required.  

\medskip
\noindent
Е1) + Е6) Note that E1) and E6) guarantee that $Exp$ satisfies axioms 1) of an exponential $R$-group.

\medskip
\noindent
E7) We impose the relations from F7): 
$$
c(f,h)_{\alpha}^{\beta}c(f^{\alpha},h^{\alpha})_{\beta}
    =c(f,h)_{\beta}^{\alpha}c(f^{\beta},h^{\beta})_{\alpha}
$$
for any $f,h \in G$ and $\alpha,\beta\in R$, which follow from the commutativity of $R$ and the satisfiability of F0), F3), and Axiom 2.2. Clearly, these relations follow from E0), E3), and E9) below (which describe Axiom 2.2). 

\medskip
\noindent
E8) To guarantee Axiom 2.1, $g^{\alpha+\beta} = g^{\alpha}g^{\beta}$, we need to add the relations
$$
 c(g,h)_{\alpha +\beta} = c(g,h)_{\alpha}c(g,h)_{\beta}
$$
for all $c$-commutators from $C$. Then, provided that F3) holds for $Exp$, we have, according to F8), that $Exp$ satisfies Axiom 2.1.

\medskip
\noindent
E9) To guarantee Axiom 2.2, $g^{\alpha \beta} = (g^{\alpha})^{\beta}$, we need to add the relations 
 $$
 c(g,h)_{\alpha\beta} = c(g^{\alpha},h^{\alpha})_\beta  c(g,h)_{\alpha}^{\beta}.
$$
Then, provided that F3) holds for $Exp$, we have, according to F9), that $Exp$ satisfies Axiom 2.2.

\medskip
\noindent
E10) To guarantee Axiom 4), we need to add the relations from F10), namely 
$$
c(g,h)_\alpha = e
$$
for all $g, h \in G$ such that $[g,h] = e$, and all $\alpha \in R$.

It will be convenient for us to rewrite the relations from E10) in a slightly different form. Denote $\bar \alpha = (\alpha_1, \ldots,\alpha_m), \bar \beta = (\beta_1, \ldots,\beta_m)$, $\bar u^{\bar \alpha} = u_1^{\alpha_1} \ldots u_m^{\alpha_m}, \bar u^{\bar \beta} = u_1^{\beta_1} \ldots u_m^{\beta_m}$. Then, modulo the commutator subgroup $H = (G\otimes R)'$, we have for any $\lambda \in R$
$$
(\bar u^{\bar \alpha} \bar u^{\bar \beta})^\lambda  \underset{H}{=} (\bar u^{\bar \alpha})^\lambda (\bar u^{\bar \beta})^\lambda c(\bar u^{\bar \alpha},\bar u^{\bar \beta})_\lambda \underset{H}{=} \bar u^{{\bar \alpha}\lambda } \bar u^{{\bar \beta}\lambda }c(\bar u^{\bar \alpha})_\lambda c(\bar u^{\bar \beta})_\lambda c(\bar u^{\bar \alpha}, \bar u^{\bar \beta})_\lambda, 
$$
where $c(\bar u^{\bar \alpha})_\lambda = c(u_1^{\alpha_1}, \ldots, u_m^{\alpha_m})_\lambda$ and $c(\bar u^{\bar \beta})_\lambda = c(u_1^{\beta_1}, \ldots , u_m^{\beta_m})_\lambda$. 
On the other side, $\bar u^{\bar \alpha} \bar u^{\bar \beta} \underset{H}{=} \bar u^{\bar \alpha +\bar \beta}$. Therefore
$$
(\bar u^{\bar \alpha} \bar u^{\bar \beta})^\lambda  \underset{H}{=} (\bar u^{\bar \alpha +\bar \beta})^\lambda \underset{H}{=} \bar u^{(\bar \alpha +\bar \beta)\lambda} c(\bar u^{\bar \alpha +\bar \beta})_\lambda.
$$
From here, since $G\otimes R = (G\otimes_{\mathcal{H}} R) \times D$ and $H \leq G\otimes_{\mathcal{H}} R$, we obtain that
$$
c(\bar u^{\bar \alpha})_\lambda c(\bar u^{\bar \beta})_\lambda c(\bar u^{\bar \alpha}, \bar u^{\bar \beta})_\lambda = c(\bar u^{\bar \alpha +\bar \beta})_\lambda.
$$
If $[\bar u^{\bar \alpha}, \bar u^{\bar \beta}] = e$ then, according to E10), $c(\bar u^{\bar \alpha}, \bar u^{\bar \beta})_\lambda =e$ and therefore the following relations are satisfied.

\medskip
\noindent
Е10')  If $[\bar u^{\bar \alpha}, \bar u^{\bar \beta}] = e$, then 
\begin{equation} \label{eq:E11)}
c(\bar u^{\bar \alpha +\bar \beta})_\lambda =  c(\bar u^{\bar \alpha})_\lambda c(\bar u^{\bar \beta})_\lambda .   
\end{equation}
It is clear that E10') implies E10), and therefore Axiom 4).

\medskip
\noindent
Е11) As noted in F11), Axiom 4) and property F3) imply (are equivalent to) the satisfiability of Axiom 3) of $R$-exponential groups.

\medskip
\noindent
E12) We impose the relations from F12):
$$
c(g,h)_{\alpha^{-1}}=c(g^{\alpha^{-1}},h^{\alpha^{-1}})_\alpha^{-\alpha^{-1}}
$$
for any $g, h \in G$ and invertible $\alpha \in R$. As noted in F12), these relations depend on the others; they follow from F1) and F9), but it is convenient to mention them separately and use them in the proofs.

\medskip
\noindent
E13) Relations from F13): for any $g,h \in G$, $\alpha \in R$
$$
c(g,h)_\alpha = c(h,g)_\alpha.
$$
They follow from F3), but it is convenient to have them directly.

\medskip
\noindent
E14) Relations from F14): for any $g,h \in G$, $\alpha \in R$
$$
 c(gh,f)_\alpha c(g,h)_\alpha =c(g,hf)_\alpha c(h,f)_\alpha.
$$
They follow from F3), but it is convenient to have them directly.

\medskip
\noindent
E15) Relations from F15): for any $g,h \in G$, $\alpha \in R$
$$
   c(h^{-1}g,h)_\alpha =  c(h^{-1},g)_\alpha^{-1}.
   $$
These relations are equivalent to Axiom 3) in the presence of F0) and F3), so they are derived from E0), E3) and E10). But they are convenient to work with, and we add them.

\begin{theorem} \label{th:5.1}
Let $G$ be an arbitrary 2-nilpotent finitely generated torsion-free group and $R$ an arbitrary binomial domain. Then the $R$-group $G\otimes R$ satisfies the following conditions:
\begin{itemize}
    \item [1)] $G \otimes R$, as an abstract group (not an $R$-group!), is isomorphic to $(G \otimes_{\mathcal{H}} R) \times D$, where $G \otimes_{\mathcal{H}} R$ is the Hall $R$-completion of $G$, considered as an abstract group (not an $R$-group), and $D$ is the $R$-module generated by all $c$-commutators $c(x^\alpha,y^\beta)_\mu, \alpha, \beta, \mu \in R$, with defining relations E0)-E15);
    \item [2)] $R$-exponentiation  $Exp: (G \otimes R) \times R \to G \otimes R$ in the group $G \otimes R$ is defined by formulas (\ref{eq:main-def}), (\ref{eq:main-def-2}) and (\ref{eq:main-def-3});
    \item [3)] $D$ is a free $R$-module.
\end{itemize} 
\end{theorem}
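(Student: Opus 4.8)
The plan is to read parts~1) and~2) off from Theorem~\ref{th:tensor-completions} together with the construction of $Exp$ carried out in steps E0)--E15), and to obtain~3) by a direct analysis of the presentation of $D$. For~1) and~2): by Theorem~\ref{th:tensor-completions} we already have $G\otimes R\simeq(G\otimes_{\mathcal H}R)\times D$ as abstract groups, with $D=\ker\mu$ central and $R$-generated by all $c$-commutators. Let $D'$ be the $R$-module defined \emph{abstractly} by the generating set $C$ and the relations E0)--E15), form the direct product of abstract groups $H=(G\otimes_{\mathcal H}R)\times D'$ (so that $D'$ sits centrally), and equip it with the operation $Exp\colon H\times R\to H$ prescribed by \eqref{eq:main-def}--\eqref{eq:main-def-3} (the symbols $c(\bar u^{\bar\alpha},\dots)_\lambda$ on the right being read in $D'$). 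The first task is to check that $Exp$ makes $H$ a group of the quasivariety $\N_{2,R}$: this is exactly what the equivalences F0)--F15) assert once the relations E0)--E15) are in force --- F10) gives Axiom~4), F8) and F9) give Axioms~2.1 and~2.2, F11) (equivalently F15)) gives Axiom~3), and Axiom~1) is E1)+E6). One also verifies, as in the proof of Theorem~\ref{th:tensor-completions}, that $H$ is $R$-generated by the subgroup $G$.

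Then I would run the universal property in both directions. The inclusion $G\hookrightarrow H$ is a homomorphism into a group of $\N_{2,R}$, hence extends to an $R$-homomorphism $\psi\colon G\otimes R\to H$, which is surjective because $H$ is $R$-generated by $\psi(\lambda(G))=G$. Conversely, in the genuine group $G\otimes R$ the $c$-commutators satisfy all of F0)--F15) --- these identities were derived from the $R$-group axioms, which hold in $G\otimes R$ --- hence all the relations E0)--E15); and the $R$-exponentiation of $G\otimes R$ obeys the formulas \eqref{eq:main-def}--\eqref{eq:main-def-3} themselves, since \eqref{eq:exp-c-comm} is just the definition of $c(g,h)_\alpha$ and \eqref{eq:general-formula} its iterate. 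Combining this with the group isomorphism $H_{\bar u\cdot\bar v}\simeq G\otimes_{\mathcal H}R$ of Theorem~\ref{th:tensor-completions}(1) produces an $R$-homomorphism $\phi\colon H\to G\otimes R$, which is onto. Since $\psi$ and $\phi$ both restrict to the identity on $G$, the composite $\phi\circ\psi$ is an $R$-endomorphism of $G\otimes R$ fixing the $R$-generating set $G$, hence is the identity; therefore $\psi$ is an $R$-isomorphism. This proves~1) (with $D\simeq D'$) and~2).

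For part~3), the technical core, I would analyse the presentation $D=\langle C\mid\text{E0)--E15)}\rangle_R$. First trim the generating set: by the symmetry relation E13) and the centre-insensitivity relations F6) (established in E6)), every generator reduces to one of the form $c(\bar u^{\bar\alpha},\bar u^{\bar\beta})_\lambda$, symmetric in $\bar\alpha,\bar\beta\in R^m$, and by E10) (applied in $G\otimes R$) it vanishes whenever $[\bar u^{\bar\alpha},\bar u^{\bar\beta}]=e$. Next, using the associativity relations E14) and the multiplicativity relations E10') one peels the factors $u_i^{\alpha_i}$ off $\bar u^{\bar\alpha}$ and $\bar u^{\bar\beta}$ one at a time, expressing every generator as an $R$-combination of a much smaller family; relations E8) (additivity in $\lambda$) and E9) (the twisted Leibniz law $c(g,h)_{\alpha\beta}=c(g^\alpha,h^\alpha)_\beta\,c(g,h)_\alpha^{\beta}$) then control how the parameter $\lambda$ and the $R$-action interlock. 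Writing $L$ for the free $R$-module on $C$ and $S_R$ for the submodule generated by the relations, the goal is to single out a subset $B\subseteq C$ so that $L=L_B+S_R$ (every generator is congruent modulo E0)--E15) to an $R$-combination of $B$) and $L_B\cap S_R=0$; then $D=L/S_R\simeq L_B$ is free on the image of $B$. It is natural to organize this by induction on the rank $m$ of $G/Z(G)$ --- for $m\le1$, $G$ is abelian and $D=0$ --- adjoining at each step the generators that newly involve $u_m$.

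The main obstacle is the independence statement $L_B\cap S_R=0$: one must show that the defining relations E0)--E15) introduce \emph{no hidden dependence or torsion} among the surviving generators. The delicacy is that E9) is not $R$-linear in the naïve sense --- it entangles the module structure of $D$ with the external parameter $\lambda$ --- and, since $R$ is only a binomial domain and not a principal ideal domain, one cannot fall back on the structure theory of modules. The way I would handle this is to construct explicitly, for the candidate set $B$, an $R$-module retraction $\rho\colon L\to L_B$ that is the identity on $L_B$ and annihilates every relation in E0)--E15) (so that $L_B\cap S_R\subseteq L_B\cap\ker\rho=0$); the definition of $\rho$ uses the binomial structure of $R$, and checking that it kills each of E0)--E15) is a lengthy but mechanical consistency verification internal to the calculus of $c$-commutators of Section~\ref{sec:commutators}. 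Once such a retraction is in hand, $D$ is a direct sum of copies of $R$, hence a free $R$-module, and the proof is complete.
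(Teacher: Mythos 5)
Your treatment of parts 1) and 2) is essentially the paper's own argument: build the abstract group $H=(G\otimes_{\mathcal H}R)\times D'$ with $D'$ presented by $C$ and E0)--E15), define $Exp$ by \eqref{eq:main-def}--\eqref{eq:main-def-3}, check Axioms 1)--4) via the equivalences F0)--F15), and then run the universal property of $G\otimes R$ in both directions; together with Theorem \ref{th:tensor-completions} this gives 1) and 2) exactly as in the paper.

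For part 3), however, there is a genuine gap. Your reduction of freeness to the two conditions $L=L_B+S_R$ and $L_B\cap S_R=0$ is fine in principle, but the entire content of the independence half is delegated to an unconstructed object: you never say what the retraction $\rho\colon L\to L_B$ is, nor even what the candidate set $B$ is, and defining $\rho$ on a generator $c(g,h)_\lambda$ amounts to exhibiting its normal form over $B$ --- i.e.\ to producing an explicit basis of $D$. That is precisely what the paper declares to be a hard problem depending essentially on both $G$ and $R$ (it is solved only for $UT_3(\mathbb Z)$ with $R=\mathbb Q[t]$ or $\mathbb Q(t)$ in Section 6), so for an arbitrary binomial domain $R$ your plan cannot be completed as sketched; calling the verification ``lengthy but mechanical'' assumes the hardest step. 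The paper proves 3) by a different and basis-free observation that your proposal misses: every relation in E0)--E15), after the harmless rewritings needed for E7) and E14), has the triangular form ``one $c$-commutator equals a product of $R$-exponents of \emph{other} $c$-commutators''. Imposing a relation of this shape is a Tietze-type move that simply deletes the left-hand generator from $C$ together with that relation, and performing these eliminations in a proper order leaves a generating set with an empty set of defining relations, hence a free $R$-module --- with no need to identify a basis or build a retraction. Without either that structural observation or an actual construction of $\rho$, your argument does not establish 3).
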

\begin{proof}
    Statements 1) and 2) follow from Theorem \ref{th:tensor-completions} and the arguments presented above in the discussion of relations E0)-E15). Now we prove 3). By construction, the $R$-module $D$ has a presentation given by generators $C= \{c(g,h)_\alpha \mid g, h \in G\otimes R, \alpha \in R\}$ and defining relations $S$ obtained by the union of all relations E0)-E15). Note that all relations from $S$ have the form when one c-commutator is equal to the product of other c-commutators or their $R$-exponents. Indeed, all relations from E0)-E15) are written in this way, except for relations from E7) and E14). Relations from E7) have the form 
    $$
c(f,h)_{\alpha}^{\beta}c(f^{\alpha},h^{\alpha})_{\beta}
    =c(f,h)_{\beta}^{\alpha}c(f^{\beta},h^{\beta})_{\alpha},
$$
and therefore they can be rewritten in the form
 $$
c(f^{\alpha},h^{\alpha})_{\beta}
=c(f,h)_{\beta}^{\alpha}c(f^{\beta},h^{\beta})_{\alpha}c(f,h)_{\alpha}^{-\beta}.
$$
The relations from E14) are of the form
    $$
 c(gh,f)_\alpha c(g,h)_\alpha =c(g,hf)_\alpha c(h,f)_\alpha,
$$
    and therefore they can be rewritten in the form
    $$
 c(gh,f)_\alpha  =c(g,hf)_\alpha c(h,f)_\alpha c(g,h)_\alpha^{-1}.
$$
    
    Imposing such relations is equivalent to removing the c-commutators that appear on the left-hand sides of the relations from the set of generators and the corresponding relations from the set of defining relations. If we do this in the proper order, we end up with an R-module with an empty set of defining relations; that is, a free R-module.
\end{proof}

\begin{remark}
    By Theorem \ref{th:5.1}, the $R$-module $D$ is a free $R$-module. However, finding a basis for this module is, in general, a rather difficult problem. Indeed, firstly, Axioms 2.1 and 2.2 allow us to write c-commutators of the form $c(g,h)_\lambda$ in terms of a product (possibly with $R$-exponents) of c-commutators of the form $c(g_1,h_1)_\mu$, where $g_1,h_1 \in G\otimes R$ and $\mu$ run over elements from an arbitrary but fixed set of generators of the ring $R$. Secondly, Axiom 4) depends on the commutation of elements in the group $G$; moreover, each pair of commuting elements $g,h \in G\otimes R$ produces a whole series of relations from E10') of the form
    $$
    c(\bar u^{\bar \alpha +\bar \beta})_\lambda =  c(\bar u^{\bar \alpha})_\lambda c(\bar u^{\bar \beta})_\lambda.
    $$
    This series of relations depends essentially on the  sets of additive generators of  the ring $R$. Thus, the structure of the module $D$ depends on both the group $G$ and the ring $R$. 
In the next section, we give two examples of the structure of the module $D$ when the group $G$ is the group $UT_3(\mathbb Z)$, that is, the free 2-nilpotent group of rank 2, and the ring $R$ is either the polynomial ring $\mathbb{Q}[t]$ or the field of rational functions $\mathbb{Q}(t)$.
\end{remark}

\section{Free 2-step nilpotent \texorpdfstring{$R$}{}-groups of rank 2}

In this section, we construct free 2-step nilpotent $R$-groups of rank 2 when the ring $R$ is either the polynomial ring $R=\mathbb{Q}[t]$ or its field of fractions $R=\mathbb{Q}(t)$. As noted in section \ref{se:tensor-completions}, every free 2-step nilpotent $R$-group is an $N\otimes R$ tensor completion of the ordinary free 2-nilpotent group $N$ of the corresponding rank. Therefore, we can use the results from the previous section on tensor completions $N \otimes R$.

For the rest of this section, we denote by $N$ the free nilpotent group of rank 2 generated by two elements $x$ and $y$, and by $N \otimes_{\mathcal{H}} R$ its Hall $R$-completion. The tuple $u = (x,y,[y,x])$ constitutes a Mal'tsev basis for both $N$ and $N \otimes_{\mathcal{H}} R$. 

Now we give a precise description of the group $G = N \otimes R$ according to the general description of groups of the form $G \otimes R$ in Theorem \ref{th:5.1} of Section \ref{sec:6}. By Theorem \ref{th:5.1} 
$$
G = N\otimes R \simeq (N \otimes_{\mathcal{H}} R) \times D,
$$
where $D$ is the $R$-module generated by all c-commutators $c(g,h)_\lambda$ of the group $N \otimes R$. Exponentiation in the group $N \otimes R$ is given by the formula
\begin{equation} \label{eq:exponentiation-new}
	  (x^\alpha y^\beta [y,x]^\gamma)^\mu = x^{\alpha\mu} y^{\beta\mu} [y,x]^{-\alpha \beta\binom{\mu}{2} +\gamma\mu} c(x^\alpha,y^\beta)_\mu.  
\end{equation}
Note that $D = \langle c(x^\alpha,y^\beta)_\lambda \mid \alpha, \beta, \lambda \in R \rangle_R$. Indeed, by E0) + E3), if $gh = g_1h_1$ modulo the commutator subgroup $H$ of the group $N\otimes R$, then $c(g,h)_\lambda =c(g_1,h_1)_\lambda$. Since for any $g,h \in N\otimes_{\mathcal{H}} R$ there exist $\alpha, \beta \in R$ such that $gh \underset{H}{=} x^\alpha y^\beta$, we obtain the desired result.

Our task is to establish the exact structure of the $R$-module $D$, following the description from Theorem \ref{th:5.1}.

\medskip
\noindent
Case 1.  $R= \mathbb{Q}[t]$.

The relations in E1), E8) and E9) show that the $R$-module $D$ is generated by the elements $c(x^\alpha,y^\beta)_t$, where $\alpha, \beta \in R$, so that 
\begin{equation} \label{eq:gensD}
    D = \langle c(x^\alpha,y^\beta)_t \mid \alpha, \beta \in R\rangle_R.
\end{equation}
However, this is not yet a minimal set of generators of the $R$-module $D$, since Axiom 4) makes the c-commutators $c(g,h)_\lambda$ trivial, provided that $[g,h] = e$. For example, $c(x^\alpha,x^\beta)_\lambda = e$, and also $c(y^\alpha,y^\beta)_\lambda = e$. We claim that $D$ is a free $R$-module and now describe its basis precisely. For this, we will use formula (\ref{eq:E11)}) from E10'), namely, if $[x^\alpha y^\beta,x^{\alpha_1} y^{\beta_1}] =e$, then
\begin{equation} \label{eq:additive}
c(x^{\alpha+\alpha_1},y^{\beta+\beta_1})_\lambda = c(x^{\alpha},y^{\beta})_\lambda c(x^{\alpha_1},y^{\beta_1})_\lambda.
\end{equation}
 We introduce an equivalence relation on $R^2$ by setting $(\alpha_1,\beta_1)\sim (\alpha_2,\beta_2)$ if and only if the vectors $(\alpha_1,\beta_1)$ and $(\alpha_2,\beta_2)$ are linearly dependent over $R$ (the determinant $\begin{vmatrix} \alpha_1 & \beta_1 \\ \alpha_2 & \beta_2 \end{vmatrix}=0$). We denote by $(\overline{\alpha,\beta})$ the equivalence class containing the pair $(\alpha,\beta)$. It is known that the centralizer of a non-central element $x^\alpha y^\beta$ in the group $N\otimes_{\mathcal{H}} R$ consists of the center of $N\otimes_{\mathcal{H}} R$ and all elements $x^{\alpha_1} y^{\beta_1}$ such that $(\alpha,\beta) \sim (\alpha_1,\beta_1)$.
Since $c(x^\alpha,y^\beta)_\lambda = e$ if $\alpha = 0$ or $\beta =0$, then in constructing a basis for $D$ we can restrict ourselves to the elements $c(x^\alpha,y^\beta)_\lambda$, where $\alpha \neq 0$ and $\beta \neq 0$. Let $R_0^2=\{(\alpha,\beta)\mid\;\alpha,\beta\in R,\;\alpha\beta\neq 0\}$. From another point of view, equivalence classes are maximal one-generated submodules of the module $R \times R$ without zero element. The pair $(\alpha,\beta) \in R_0^2$ generates the class $(\overline{\alpha,\beta})$ as a submodule if and only if the greatest common divisor of the pair $(\alpha,\beta)$ is equal to $1$. Such a pair $(\alpha,\beta)$ will be called a \emph{special representative} for $(\overline{\alpha,\beta})$.

Let the pair $(\alpha, \beta) \in R_0^2$ be a special representative in the equivalence class $p = (\overline{\alpha,\beta})$.
If $(\alpha_1, \beta_1) \in (\overline{\alpha,\beta})$, then there exists $\gamma$ in $R$ such that $\alpha_1 = \gamma \alpha$ and $\beta_1 = \gamma \beta$. Suppose that $\gamma = \gamma_1 + \gamma_2$ in the ring $R$. Then 
$$
[x^{\gamma_1 \alpha }y^{\gamma_1 \beta }, x^{\gamma_2 \alpha }y^{\gamma_2 \beta }] = [(x^{\alpha}y^{\beta})^{\gamma_1}, (x^{\alpha }y^{\beta})^{\gamma_2}] = [(x^{\alpha}y^{\beta}), (x^{\alpha }y^{\beta})]^{\gamma_1\gamma_2} = e,
$$
and therefore, according to the formula (\ref{eq:additive}), 
$$
c(x^{\gamma \alpha },y^{\gamma \beta })_\lambda = c(x^{\gamma_1 \alpha },y^{\gamma_1\beta })_\lambda c(x^{\gamma_2\alpha },y^{\gamma_2\beta })_\lambda.
$$
From where, if $\gamma = r_0 + r_1 t + \ldots + r_s t^s \in R$, then 
\begin{equation}\label{eq:43}
    c(x^{\gamma \alpha}, y^{\gamma \beta})_\lambda = c(x^{r_0 \alpha}, y^{r_0 \beta})_\lambda c(x^{r_1 t \alpha}, y^{r_1 t \beta})_\lambda \cdots c(x^{r_s t^s \alpha}, y^{r_s t^s \beta})_\lambda.
\end{equation}
Since $r_i \in \mathbb{Q}$, then
\begin{equation}\label{eq:46}
    c(x^{r_i t^i \alpha}, y^{r_i t^i \beta})_\lambda =  c(x^{t^i \alpha}, y^{t^i \beta})_\lambda^{r_i}.
    \end{equation}
In particular,
\begin{equation}\label{eq:46new}
    c(x^{r_i t^i \alpha}, y^{r_i t^i \beta})_t =  c(x^{t^i \alpha}, y^{t^i \beta})_t^{r_i}.
    \end{equation}
Therefore, the set of $R$-generators of $D$ from (\ref{eq:gensD}) can be further reduced using relations (\ref{eq:43}) and (\ref{eq:46new}). To this end, let $P$ denote the set of special representatives of $(\alpha, \beta)$ in the equivalence classes $(\overline{\alpha,\beta})$ from $R_0^2$, and let $B$ denote the following set of c-commutators:
\begin{equation} \label{eq:gensDnew}
B = \{c(x^{\alpha t^k},y^{\beta t^k})_t \mid (\alpha, \beta) \in P, k \in \mathbb{N}\}.
\end{equation}
Then $D = \langle B\rangle_R.$ We claim that $D$ is a free $R$-module with basis $B$. Indeed, according to our construction, we start with a free $R$-module $D$ with basis $\{c(x^\alpha,y^\beta)_\lambda \mid \alpha, \beta, \lambda \in R \}$, and define the $R$-exponentiation in the group $G = N\otimes R = (N\otimes_{\mathcal{H}} R) \times D$ by (\ref{eq:main-def}).
By imposing relations according to conditions E0)-E10), E10'), as well as conditions (\ref{eq:43}) and (\ref{eq:46new}), we guarantee that our $R$-exponentiation satisfies all Axioms 1)-4) of $R$-groups. Moreover, any 2-nilpotent $R$-group $R$-generated by the group $N$ satisfies the same relations. Thus, we have constructed a group $N \otimes R$. Note that all the relations we imposed are of the form when one c-commutator is equal to a product of other c-commutators or their $R$-powers. This is equivalent to removing all c-commutators that appear on the left-hand sides of the relations from the set of generators of the original free $R$-module $D$. If we do this in the proper order, we will end up with a free $R$-module generated by the set $B$. 

\medskip
\noindent
Case 2.  $R= \mathbb{Q}(t)$.

We pass from the ring $\mathbb{Q}[t]$ to the field $\mathbb{Q}(t)$ using the same constructions as above, replacing the ring $\mathbb{Q}[t]$ with the field $\mathbb{Q}(t)$. The only difference in this case is that every nonzero element of the ring $\mathbb{Q}[t]$ is now invertible. Thus, many of the relations from E12) appear.

Following the strategy above, we construct a basis for the free module $D$ in the situation under consideration. Everything proceeds as in the case of the ring $R$, with the relations from E10') rewritten appropriately. Namely, the pairs of the form $P=\{(1,\beta)\mid\;0\neq \beta\in R\}$ can be chosen as generating elements for the equivalence classes of pairs $(\overline{\alpha,\beta})$ in this case, and the additive $\mathbb{Q}$-basis $T=\{t^k\mid\;k\in\mathbb{N}, k \neq 1\}$ (without the unit 1) of the ring $\mathbb{Q}[t]$ is completed to an additive basis $S$ of the field $\mathbb{Q}(t)$ by adding all the simple fractions of the form $\frac{t^k}{p(t)^m}$, where $p(t)$ is an irreducible monic (leading coefficient equal to 1) polynomial over $\mathbb{Q}$, $m$ is a positive integer $0\leq m < deg(p(t))$.
Let
$$
P = \{(1,\beta)s \mid 0\neq \beta \in R, s \in S \}.
$$
Then
\begin{equation} \label{eq:baza-B-2t}
     B=\big\{c(x^\alpha,y^\beta)_t \mid\;(\alpha,\beta)\in P\big\}  \end{equation} 
    is an $R$-basis of the module $D$ in the case $R = \mathbb{Q}(t)$.

Thus we have proved the following result.
\begin{theorem}\label{th:3.1}
A free $2$-step nilpotent $R$-group of rank 2 is $R$-isomorphic to a tensor completion $N \otimes R$ that satisfies the following conditions:
\begin{itemize}
    \item [1)] $N \otimes R$, as an abstract group (not an $R$-group!), is isomorphic to the group $(N \otimes_{\mathcal{H}} R) \times D$, where $N \otimes_{\mathcal{H}} R$ is the Hall $R$-completion of $N$, considered as an abstract group (not an $R$-group), and $D$ is the $R$-module generated by all $c$-commutators $c(x^\alpha,y^\beta)_\mu, \alpha, \beta, \mu \in R$, with defining relations E0)-E14);
    \item [2)] $R$-exponentiation  in the group $(N \otimes_{\mathcal{H}} R) \times D$ is defined by the formula (\ref{eq:exponentiation-new});
    \item [3)] if $R = \mathbb{Q}[t]$, then $D$ is a free $R$-module with basis $B$ introduced in (\ref{eq:gensDnew});
    \item [4)] if $R = \mathbb{Q}(t)$, then $D$ is a free $R$-module with basis $B$ introduced in (\ref{eq:baza-B-2t}).
\end{itemize} 
\end{theorem}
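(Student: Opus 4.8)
The plan is to obtain Theorem~\ref{th:3.1} from Theorem~\ref{th:5.1} by specializing to $G=N$ and then carrying out, for each of the two rings, the reductions already sketched in the body of this section. First, the opening assertion: by \cite{AMN} the tensor completion $N\otimes R$ of the free $2$-nilpotent group $N$ of rank $2$ is a free $2$-nilpotent $R$-group of rank $2$ in $\N_{2,R}$, so the two objects named in the theorem coincide up to $R$-isomorphism. Applying Theorem~\ref{th:5.1} with $G=N$ and the Mal'tsev basis $\bar u\cdot\bar v=(x,y,[y,x])$ gives at once the abstract decomposition $N\otimes R\simeq(N\otimes_{\mathcal{H}}R)\times D$ with $D$ presented by the generators $c(g,h)_\lambda$ and the relations E0)--E15); since E15) follows from E0), E3), E10), the relations E0)--E14) already suffice, which is part~1). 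Part~2) is the specialization of (\ref{eq:main-def})--(\ref{eq:main-def-3}) to this basis: here $m=2$ and $n=1$, so (\ref{eq:main-def-3}) contributes the single factor $c(x^\alpha,y^\beta)_\mu$ and the correction term $\tau_2(x^\alpha,y^\beta)^{-\binom{\mu}{2}}$ is a power of the central element $[y,x]$, whence the general formula collapses to (\ref{eq:exponentiation-new}). Finally $D=\langle c(x^\alpha,y^\beta)_\lambda\mid\alpha,\beta,\lambda\in R\rangle_R$, because by E0)+E3) the element $c(g,h)_\lambda$ depends only on the image of $gh$ modulo $(N\otimes R)'$ and every such image is represented by some $x^\alpha y^\beta$.

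\textbf{Part 3), $R=\mathbb{Q}[t]$ --- generation of $D$ by $B$.} I would run the chain of reductions indicated in the text: writing $\lambda=\sum_i r_i t^i$ and using E8) to split, E9) to extract powers of $t$, and E1), reduce to generators with subscript $t$, so that $D=\langle c(x^\alpha,y^\beta)_t\mid\alpha,\beta\in R\rangle_R$; discard the cases $\alpha=0$ or $\beta=0$ by Axiom~4) (via E10)); then factor $(\alpha,\beta)=\gamma\,(\alpha_0,\beta_0)$ with $(\alpha_0,\beta_0)$ a special representative, expand $\gamma=\sum_i r_i t^i$, observe that the pairs $r_i t^i(\alpha_0,\beta_0)$ are pairwise proportional so that the corresponding elements of $N\otimes_{\mathcal{H}}R$ commute, and apply (\ref{eq:additive}) (an instance of E10')) together with (\ref{eq:46new}) to write $c(x^\alpha,y^\beta)_t$ as an $R$-combination of the elements $c(x^{t^k\alpha_0},y^{t^k\beta_0})_t\in B$. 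Hence $D=\langle B\rangle_R$.

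\textbf{Independence of $B$ --- the main obstacle.} Theorem~\ref{th:5.1}(3) already gives that $D$ is a free $R$-module, but a free module can be generated by a non-basis, so one must still show that the relations E0)--E14) (together with the derived relations (\ref{eq:43}), (\ref{eq:46new})) impose no nontrivial $R$-linear relation on $B$. The approach is a Tietze/rewriting argument: as observed after Theorem~\ref{th:5.1}, every one of these relations can be put in the form ``one $c$-commutator equals a product of $R$-powers of other $c$-commutators''; orient each as a rewrite rule so that no element of $B$ ever stands on a left-hand side, prove that the rewriting terminates --- each step strictly decreases a well-chosen complexity of the exponents, first the subscript, then the $\gcd$-content of the pair $(\alpha,\beta)$, then the degree of $\gamma$ --- and verify that the irreducible generators are precisely the elements of $B$. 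Performing all the corresponding eliminations in this order transforms the presentation of $D$ into the free $R$-module on $B$ with empty relation set. Equivalently, and perhaps more transparently, one constructs $N\otimes R$ directly as $(N\otimes_{\mathcal{H}}R)\times L$ with $L$ the genuinely free $R$-module on $B$, defines $Exp$ by (\ref{eq:exponentiation-new}), checks Axioms 1)--4) by verifying the relevant instances of E0)--E14) (which on $L$ now involve no hidden identification of basis elements), and then checks the universal property of the tensor completion against Theorem~\ref{th:5.1}; uniqueness of $N\otimes R$ then forces $L\cong D$, so $B$ is a basis. In either form, the delicate point is the consistency of the global ordering: showing the rewriting has no cycles and leaves exactly $B$ irreducible.

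\textbf{Part 4), $R=\mathbb{Q}(t)$.} Here one reruns the two steps above over the field $\mathbb{Q}(t)$; the only structural change is that every nonzero element of $\mathbb{Q}[t]$ is now invertible, so the relations E12) become available and identify further $c$-commutators. This lets one normalize special representatives to the form $(1,\beta)$ up to a scalar, and it forces the additive $\mathbb{Q}$-basis $\{t^k\mid k\in\mathbb{N}\}$ of $\mathbb{Q}[t]$ to be completed --- by adjoining the partial fractions $t^j/p(t)^m$ with $p$ monic irreducible over $\mathbb{Q}$ and $0\le j<\deg p$ --- to an additive basis $S$ of $\mathbb{Q}(t)$. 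Repeating the generation argument with $P=\{(1,\beta)s\mid 0\neq\beta\in R,\ s\in S\}$ and the independence argument verbatim yields the basis $B$ of (\ref{eq:baza-B-2t}), completing the proof.
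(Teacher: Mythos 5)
Your proposal is correct and follows essentially the same route as the paper: specialize Theorem~\ref{th:5.1} to $G=N$ with the Mal'tsev basis $(x,y,[y,x])$, reduce the generating set of $D$ via E1), E8), E9), E10$'$) and the derived relations (\ref{eq:43}), (\ref{eq:46new}) (with the E12)/partial-fraction adjustments for $\mathbb{Q}(t)$), and obtain freeness with basis $B$ by eliminating, in a suitable order, every $c$-commutator occurring on a left-hand side of a relation. If anything, you are more explicit than the paper about the one point it leaves implicit --- that the eliminations must be ordered by a terminating, cycle-free scheme leaving exactly $B$ irreducible (the paper just says ``if we do this in the proper order''), and your alternative universal-property formulation is likewise already present in the paper's construction of $N\otimes R$ from the free module on $B$.
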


\section{Open problems}

\begin{problem}
Let $R$ be a field of characteristic 0, and $G$ be a finitely generated free 2-nilpotent group. By Theorem 5.1, $D$ is a vector space over $R$. Describe the basis of the vector space $D$ over $R$.
\end{problem}

\begin{problem}
Let $G$ be a free 2-nilpotent group of finite rank, and $R$ be a constructive (recursive) ring, e.g., $R = \mathbb{Q}[t]$, or $R = \mathbb{Q}(t)$, or $R$ is a finite extension of $\mathbb{Q}$. The problem is to develop a rewriting process that, given an element $g \in G \otimes R$ as an $R$-word $w(g_1, \ldots,g_n)$, transforms $w$ into a product $g = \bar u^{\bar\alpha}\bar v^{\bar \beta} d$, where $d \in D$ is defined as a product of c-commutators (with exponents in $R$) in the corresponding free basis of the module $D$, which exists by Theorem \ref{th:5.1}.
\end{problem}

\begin{problem}
Describe the relations between c-commutators in 3-nilpotent $R$-groups, similar to those in Section 3.
\end{problem}

\begin{problem}
Describe the free nilpotent $R$-groups of finite rank in the class $\N_{3,R}$.
\end{problem}

\textbf{Acknowledgments}. This work was supported by the Shota Rustaveli National Science Foundation of Georgia under project FR-21-4713.


\end{document}